\documentclass[final,1p,times]{elsarticle}

\usepackage[colorlinks=true]{hyperref}
\usepackage{mathrsfs}
\usepackage{amsthm,amsmath,amssymb,mathrsfs,amsfonts,graphicx,graphics,latexsym,exscale,cmmib57,dsfont,bbm,amscd,ulem}
\usepackage{color,soul}
\frenchspacing
\newtheorem{thm}{Theorem}[section]
\newtheorem{lemma}[thm]{Lemma}
\newtheorem{cor}[thm]{Corollary}
\newtheorem*{corB}{Corollary}
\newtheorem*{cor1}{Corollary~1}
\newtheorem*{cor2}{Corollary~2}
\theoremstyle{definition}
\newtheorem{defn}[thm]{Definition}
\newtheorem{conj}[thm]{Conjecture}
\newtheorem{prop}[thm]{Proposition}

\numberwithin{equation}{section}
\def\x{\boldsymbol{x}}
\def\P{\boldsymbol{P}}
\journal{arXiv, July 2, 2015}

\begin{document}

\begin{frontmatter}
\title{On chaotic minimal center of attraction of a Lagrange stable motion for topological semi flows}%

\author{Xiongping Dai}
\ead{xpdai@nju.edu.cn}

\address{Department of Mathematics, Nanjing University, Nanjing 210093, People's Republic of China}

\begin{abstract}
Let $f\colon\mathbb{R}_+\times X\rightarrow X$ be a topological semi flow on a Polish space $X$. In 1977, Karl Sigmund conjectured that if there is a point $\x$ in $X$ such that the motion $f(t,\x)$ has just $X$ as its minimal center of attraction, then the set of all such $\x$ is \textit{residual} in $X$. In this paper, we present a positive solution to this conjecture and apply it to the study of chaotic dynamics of minimal center of attraction of a motion.
\end{abstract}

\begin{keyword}
Minimal center of attraction $\cdot$ Chaotic motion $\cdot$ Semi flow

\medskip
\MSC[2010] 37B05 $\cdot$ 37A60 $\cdot$ 54H20
\end{keyword}
\end{frontmatter}


\section{Introduction}\label{sec1}

By a \textit{$C^0$-semi flow} over a metric space $X$, we here mean a transformation
$f\colon \mathbb{R}_+\times X\rightarrow X$ where $\mathbb{R}_+=[0,\infty)$, which satisfies the following three conditions:
\begin{enumerate}
\item[(1)] The initial condition: $f(0,x)=x$ for all $x\in X$.
\item[(2)] The condition of continuity: if there be given two convergent sequences $t_n\to t_0$ in $\mathbb{R}_+$ and $x_n\to x_0$ in $X$, then $f(t_n,x_n)\to f(t_0,x_0)$ as $n\to\infty$.
\item[(3)] The semigroup condition: $f(t_2,f(t_1,x))=f(t_1+t_2,x)$ for any $x$ in $X$ and any times $t_1,t_2$ in $\mathbb{R}_+$.
\end{enumerate}
Sometimes we write $f(t,x)=f^t(x)$ for any $t\ge0$ and $x\in X$; and for any given point $x\in X$ we call $f(t,x)$ a motion in $X$ and $\mathcal{O}_f(x)=f(\mathbb{R}_+,x)$ the orbit starting from the point $x$. If $\mathcal{O}_f(x)$ is precompact (i.e. $\overline{\mathcal{O}_f(x)}$ is compact) in $X$, then we say that $f(t,x)$ is \textit{Lagrange stable}.

We refer to any subset $\Lambda$ of $X$ as an \textit{invariant} set if $f(t,x)\in\Lambda$ for each pint $x\in\Lambda$ and any time $t\ge0$. In dynamical systems, statistical mechanics and ergodic theory, we shall have to do with
``probability of sojourn of a motion $f(t,x)$ in a given region $E$ of $X$" as
$t\to+\infty$:
\begin{equation*}
\P(f(t,x)\in
E)=\lim_{T\to+\infty}\frac{1}{T}\int_0^T\mathds{1}_{E}^{}(f(t,x))dt,
\end{equation*}
where $\mathds{1}_{E}^{}(x)$ is the characteristic function of the set
$E$ on $X$.

This motivates H.F.~Hilmy to introduce following important concept, which was discussed in \cite{J,NS,Sig77,HZ}, for example.

\begin{defn}[Hilmy 1936~\cite{Hilmy}]\label{def1.1}
Given any $x\in X$, a closed subset $C_x$ of $X$ is called the {\it center of
attraction of the motion $f(t,x)$} as $t\to+\infty$ if
$\P(f(t,x)\in B_\varepsilon(C_x))=1$ for all $\varepsilon>0$, where $B_\varepsilon(C_x)$ denotes the
$\varepsilon$-neighborhood around $C_x$ in $X$. If the set $C_x$ does
not admit a proper subset which is likewise a center of attraction of the motion $f(t,x)$ as $t\to+\infty$,
then $C_x$ is called the {\it minimal center of attraction of the
motion $f(t,x)$} as $t\to+\infty$.
\end{defn}

First of all, by the classical Cantor-Baire theorem and Zorn's lemma we can obtain the following basic existence lemma.

\begin{lemma}\label{lem1.2}
Let $f\colon \mathbb{R}\times X\rightarrow X$ be a $C^0$-semi flow on a metric space $X$. Then each Lagrange stable motion $f(t,x)$ always possesses the minimal center of  attraction.
\end{lemma}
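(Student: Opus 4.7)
The plan is to use Lagrange stability to produce one concrete center of attraction, and then to extract a minimal one by applying Zorn's lemma to the collection of closed centers of attraction ordered by reverse inclusion. The delicate step will be showing that the intersection of a decreasing chain of closed centers of attraction is again a center of attraction, and it is here that Cantor's nested intersection theorem together with second countability of a compact metric space will enter.

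First I would set $K=\overline{\mathcal{O}_f(x)}$, which is compact (and hence second countable) by Lagrange stability. Since $f(t,x)\in K$ for every $t\ge 0$, one has $\P(f(t,x)\in B_\varepsilon(K))=1$ for all $\varepsilon>0$, so $K$ is already a closed center of attraction. Let $\mathcal{F}$ be the family of all nonempty closed subsets of $K$ that are centers of attraction of the motion $f(t,x)$, partially ordered by reverse inclusion. Then $K\in\mathcal{F}$, and a minimal center of attraction of $f(t,x)$ is precisely a maximal element of $(\mathcal{F},\supseteq)$. By Zorn's lemma it therefore suffices to show that every chain $\mathcal{C}=\{C_\alpha\}_{\alpha\in I}\subseteq\mathcal{F}$ admits an upper bound in this order, namely that $C:=\bigcap_\alpha C_\alpha$ belongs to $\mathcal{F}$.

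To verify this, observe first that the complements $K\setminus C_\alpha$ form an upward chain of open sets in the second countable space $K$, so a Lindel\"of argument produces a countable subchain $C_{\alpha_1}\supseteq C_{\alpha_2}\supseteq\cdots$ with the same intersection $C$. Cantor's nested intersection theorem then shows that $C$ is nonempty and closed. For any $\varepsilon>0$, compactness forces some $C_{\alpha_N}\subseteq B_{\varepsilon/2}(C)$: otherwise, picking $y_n\in C_{\alpha_n}\setminus B_{\varepsilon/2}(C)$ yields, after extraction, a cluster point inside $\bigcap_n C_{\alpha_n}=C$ at distance at least $\varepsilon/2$ from $C$, which is absurd. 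Hence $B_{\varepsilon/2}(C_{\alpha_N})\subseteq B_\varepsilon(C)$, and since $C_{\alpha_N}$ is a center of attraction, monotonicity gives $\P(f(t,x)\in B_\varepsilon(C))\ge \P(f(t,x)\in B_{\varepsilon/2}(C_{\alpha_N}))=1$. Thus $C\in\mathcal{F}$, and Zorn's lemma supplies the desired minimal center of attraction.

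I expect the main obstacle to be this chain step, specifically the reduction from an arbitrary decreasing chain to the countable setting in which Cantor's theorem applies, together with the tube inclusion $B_{\varepsilon/2}(C_{\alpha_N})\subseteq B_\varepsilon(C)$ that transfers the sojourn-probability condition from the chain members to the intersection; these are precisely the two ingredients the hint ``Cantor-Baire theorem and Zorn's lemma'' is pointing to.
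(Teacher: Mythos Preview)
Your proof is correct and follows precisely the approach the paper indicates: the paper itself gives no detailed argument for Lemma~\ref{lem1.2}, only the one-line hint ``by the classical Cantor--Baire theorem and Zorn's lemma,'' and your argument is exactly the intended expansion of that hint. Your reduction of an arbitrary chain to a countable nested sequence via Lindel\"of, the use of Cantor's intersection theorem to get $C\neq\varnothing$, and the compactness step $C_{\alpha_N}\subseteq B_{\varepsilon/2}(C)$ transferring the sojourn condition are all sound and are what the paper's hint is pointing toward.
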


From now on, by $\mathcal{C}_x$ we will understand the minimal center of attraction of a Lagrange stable motion $f(t,x)$ as $t\to+\infty$.
In \cite{Sig77}, Karl~Sigmund gave an intrinsic characterization for $\mathcal{C}_x$.
In this paper, we shall study the generic property and chaotic dynamics occurring in $\mathcal{C}_x$ for a Lagrange stable motion $f(t,x)$.


Just as the existence of one point which is topologically transitive implies that a residual set is topologically transitive, in 1977 Karl Sigmund raised the following open problem:

\begin{conj}[{K.~Sigmund 1977~\cite[Remark~4]{Sig77}}]\label{conj1.3}
For any homeomorphism $f$ of a compact metric space $X$, the set of points $x\in X$ with $\mathcal{C}_x=X$, if nonempty, is residual in $X$.
\end{conj}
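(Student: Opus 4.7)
\medskip
\noindent\textbf{Proof proposal.} My plan is to reduce Conjecture~\ref{conj1.3} to a short Baire-category argument by first converting the topological condition $\mathcal{C}_x=X$ into a statement about the empirical measures
\begin{equation*}
\mu_{x,N}\;:=\;\frac{1}{N}\sum_{n=0}^{N-1}\delta_{f^n(x)}.
\end{equation*}
Starting from Definition~\ref{def1.1}, a proper closed set $C\subsetneq X$ is a center of attraction of the motion $f^n(x)$ iff $\mu_{x,N}(X\setminus B_\varepsilon(C))\to 0$ for every $\varepsilon>0$; since $X$ is a compact metric space this is equivalent to $\limsup_N\mu_{x,N}(V)=0$ for every open $V$ with $\overline V\cap C=\emptyset$. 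Applying this with $C=X\setminus U$ for a nonempty open $U\subseteq X$ gives the intrinsic characterization
\begin{equation*}
\mathcal{C}_x=X\;\Longleftrightarrow\;\limsup_{N\to\infty}\mu_{x,N}(U)>0\ \text{for every nonempty open }U\subseteq X.
\end{equation*}

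\medskip
\noindent With this in hand I would fix a countable base $\{U_k\}_{k\ge 1}$ of nonempty open sets of $X$ (available since $X$ is second countable) and, assuming the conclusion set is nonempty, fix once and for all a point $x_0$ with $\mathcal{C}_{x_0}=X$. For each $k$, pick a constant $0<c_k<\limsup_{N}\mu_{x_0,N}(U_k)$ and set
\begin{equation*}
G_{k,N_0}\;:=\;\bigcup_{N\ge N_0}\bigl\{x\in X:\mu_{x,N}(U_k)>c_k\bigr\}\qquad(N_0\ge 1).
\end{equation*}
Because $U_k$ is open, $\mathds{1}_{U_k}$ is lower semicontinuous, and composition with each continuous iterate $f^n$ preserves this, so $x\mapsto\mu_{x,N}(U_k)$ is lower semicontinuous on $X$ and each $G_{k,N_0}$ is open.

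\medskip
\noindent The crucial step is density of each $G_{k,N_0}$. For any $y=f^j(x_0)$ the Ces\`aro averages along the orbits of $y$ and $x_0$ agree up to an $O(j/N)$ correction, so $\limsup_N\mu_{y,N}(U_k)=\limsup_N\mu_{x_0,N}(U_k)>c_k$; consequently $\sup_{N\ge N_0}\mu_{y,N}(U_k)\ge\limsup_N\mu_{y,N}(U_k)>c_k$, which gives $y\in G_{k,N_0}$. Moreover the characterization applied to $x_0$ forces $\mathcal{O}_f(x_0)\cap U_k\ne\emptyset$ for every $k$, so the forward orbit $\mathcal{O}_f(x_0)$ is dense in $X$; together with $\mathcal{O}_f(x_0)\subseteq G_{k,N_0}$ this shows $G_{k,N_0}$ is open and dense. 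By the Baire category theorem on the compact metric space $X$, the set $G:=\bigcap_{k,\,N_0\ge 1}G_{k,N_0}$ is a dense $G_\delta$, and for any $x\in G$ one has $\limsup_N\mu_{x,N}(U_k)\ge c_k>0$ for every $k$, so $\mathcal{C}_x=X$ by the characterization. Hence the set in Conjecture~\ref{conj1.3} contains $G$ and is residual in $X$.

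\medskip
\noindent The hard part will be the opening characterization; once it is in place everything else is elementary, the two supporting observations being lower semicontinuity of $x\mapsto\mu_{x,N}(U)$ for open $U$, and invariance of the Ces\`aro $\limsup$ under finite orbit shifts. A subtle but essential point is that $c_k$ must be chosen \emph{strictly} below $\limsup_N\mu_{x_0,N}(U_k)$, so that one can pass from ``$\limsup>c_k$'' to ``$\sup_{N\ge N_0}>c_k$ for every $N_0$'' and thereby land in the open set $G_{k,N_0}$; taking $c_k$ equal to the $\limsup$ would break the density step.
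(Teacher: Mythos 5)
Your proposal is correct and follows essentially the same route as the paper's proof of Theorem~\ref{thm1.4}: the intrinsic characterization of $\mathcal{C}_x$ via positive upper density of visit times to basic open sets (the paper's Lemma~\ref{lem2.1} and Corollary~\ref{cor2.3}), thresholds $c_k$ chosen strictly below the $\limsup$ at the reference point (the paper's $1/L_i$), openness of the sets $G_{k,N_0}$ by lower semicontinuity, density via the dense orbit of $x_0$ together with shift-invariance of the Ces\`aro $\limsup$, and a Baire category conclusion. The only difference is cosmetic: you argue directly in discrete time, whereas the paper works with the continuous-time averages and discretizes via Corollary~\ref{cor2.3}.
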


Since a residual set contains a dense and $G_\delta$ subset of $X$, it is very large from the viewpoint of topology. Although Sigmund's conjecture is of interest, there has not been any progresses on it since 1977 except $f$ satisfies the ``specification'' property (\cite[Proposition~6]{Sig77}).
In Section~\ref{sec2}, we will present a positive solution to Sigmund's conjecture without any imposed shadowing assumption like specification, which is stated as follows:

\begin{thm}\label{thm1.4}
Let $f\colon \mathbb{R}_+\times X\rightarrow X$ be a $C^0$-semi flow on a Polish space $X$. If some motion $f(t,\x)$ is such that $\mathcal{C}_{\x}=X$, then the set $\{x\in X\colon \mathcal{C}_x=X\}$ is residual in $X$.
\end{thm}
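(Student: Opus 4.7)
The plan is to construct an explicit dense $G_\delta$ subset $D \subseteq X$ that lies inside $\{x : \mathcal{C}_x = X\}$; residuality then follows from the Baire category theorem on the Polish space $X$. Note that Lagrange stability makes $\mathcal{C}_{\x}$ a compact subset of $\overline{\mathcal{O}_f(\x)}$, so the hypothesis $\mathcal{C}_{\x} = X$ already forces $X$ itself to be compact, which streamlines the separation arguments below and ensures every motion is Lagrange stable.

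The key preliminary is the equivalence
\begin{equation*}
\mathcal{C}_x = X \iff \limsup_{T \to \infty}\frac{1}{T}\int_0^T \mathds{1}_U(f(t,x))\, dt > 0 \text{ for every nonempty open } U \subseteq X,
\end{equation*}
which I would prove directly from Hilmy's definition by writing $\mu_T := \frac{1}{T}\int_0^T \delta_{f(t,x)}\, dt$. For ``$\Rightarrow$'', if some nonempty open $U$ had $\limsup \mu_T(U) = 0$, then $\mu_T(X \setminus U) \to 1$, exhibiting the proper closed set $X \setminus U$ as a center of attraction and contradicting the minimality of $X = \mathcal{C}_x$. For ``$\Leftarrow$'', any $y \in X \setminus \mathcal{C}_x$ has $\varepsilon := d(y,\mathcal{C}_x)/3 > 0$ (by compactness of $\mathcal{C}_x$), so $B_\varepsilon(y) \subseteq X \setminus B_\varepsilon(\mathcal{C}_x)$ forces $\mu_T(B_\varepsilon(y)) \le 1 - \mu_T(B_\varepsilon(\mathcal{C}_x)) \to 0$, contradicting the hypothesis for $U := B_\varepsilon(y)$.

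With the characterization in hand, fix a countable base $\{U_n\}_{n \ge 1}$ of nonempty open subsets of $X$ (available since Polish implies second countable), and set $\alpha_n := \limsup_{T \to \infty}\frac{1}{T}\int_0^T \mathds{1}_{U_n}(f(t,\x))\, dt$, which is strictly positive by the characterization applied to $\x$. Define
\begin{equation*}
A_n := \bigcap_{N=1}^{\infty}\bigcup_{T > N}\Bigl\{x \in X : \tfrac{1}{T}\int_0^T \mathds{1}_{U_n}(f(t,x))\, dt > \alpha_n/2\Bigr\}, \qquad D := \bigcap_{n=1}^{\infty} A_n.
\end{equation*}
Because $\mathds{1}_{U_n}$ is lower semicontinuous (as $U_n$ is open) and $f$ is jointly continuous, Fatou's lemma shows $x \mapsto \int_0^T \mathds{1}_{U_n}(f(t,x))\, dt$ is l.s.c. on $X$, so each inner set is open, every $A_n$ is $G_\delta$, and consequently $D$ is $G_\delta$. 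By the characterization, $D \subseteq \{x : \mathcal{C}_x = X\}$ immediately, since every nonempty open $V \subseteq X$ contains some $U_n$.

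Density of $D$ follows because $\overline{\mathcal{O}_f(\x)} \supseteq \mathcal{C}_\x = X$, so the orbit of $\x$ is dense, and it suffices to check $\mathcal{O}_f(\x) \subseteq D$. For any $s \ge 0$, the semigroup law $f(t,f(s,\x)) = f(t+s,\x)$ combined with the substitution $u = t+s$ yields
\begin{equation*}
\limsup_{T \to \infty}\frac{1}{T}\int_0^T \mathds{1}_{U_n}(f(t,f(s,\x)))\, dt = \limsup_{T \to \infty}\frac{1}{T}\int_s^{T+s}\mathds{1}_{U_n}(f(u,\x))\, du = \alpha_n,
\end{equation*}
since the endpoint corrections are $O(1/T)$ and the factor $(T+s)/T \to 1$. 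Hence $f(s,\x) \in A_n$ for every $s \ge 0$ and every $n$. The main subtlety I foresee is the characterization lemma itself -- specifically the role of Lagrange stability in supplying both compactness of $\mathcal{C}_x$ for the separation argument and compactness of $X$ under the global hypothesis. Once this lemma is in place, the remainder is a clean Baire category construction modelled on the classical residuality proof for topologically transitive points.
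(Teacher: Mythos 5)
Your proposal is correct and follows essentially the same route as the paper: the characterization of $\mathcal{C}_x$ via positive upper sojourn density in every neighborhood (the paper's Lemma~2.1), a countable base $\{U_n\}$, the $G_\delta$ sets of points whose upper sojourn density in $U_n$ exceeds a fixed positive threshold (your $A_n$ are the paper's $\Theta_i$, with your lower-semicontinuity/Fatou argument making explicit what the paper leaves implicit), and density obtained from the shift-invariance of the Ces\`{a}ro limsup along the dense orbit of $\x$. The only quibble is your opening claim that the hypothesis forces $X$ to be compact --- Theorem~\ref{thm1.4} does not assume Lagrange stability, and compactness is never actually needed (closedness of a putative center of attraction already gives the separation you use) --- but this aside is harmless to the argument.
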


Our argument of Theorem~\ref{thm1.4} below also works for discrete-time case. Thus we can obtain the following:

\begin{corB}
For any continuous transformation $f$ of a Polish space $X$, the set of points $x\in X$ with $\mathcal{C}_x=X$, if nonempty, is residual in $X$.
\end{corB}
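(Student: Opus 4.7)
The plan is to adapt the continuous-time strategy used in the proof of Theorem~\ref{thm1.4}, replacing the averages $\frac{1}{T}\int_0^T\cdot\,dt$ by the discrete Ces\`aro sums $\frac{1}{N}\sum_{k=0}^{N-1}\cdot\,$. One then proceeds in three steps: reformulate ``$\mathcal{C}_x=X$'' as a countable conjunction of $\limsup$-positivity conditions involving continuous observables, build a dense $G_\delta$ inside each condition, and combine by Baire category.

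First, pick a dense sequence $\{y_i\}_{i\geq 1}\subseteq X$ and rational radii $r_j\in\mathbb{Q}_+$, and for each pair define the Lipschitz tent
\[
\psi_{i,j}(x)=\max\bigl\{0,\,1-d(x,y_i)/r_j\bigr\},
\]
so that $\tfrac12\mathds{1}_{B(y_i,r_j/2)}\leq \psi_{i,j}\leq \mathds{1}_{B(y_i,r_j)}$. From the minimality property of $\mathcal{C}_x$ one verifies that $y\in\mathcal{C}_x$ iff $\limsup_N \frac{1}{N}|\{0\leq k<N:f^k x\in U\}|>0$ for every open $U\ni y$; combining this with density of $\{y_i\}$ and the above sandwich yields
\[
\{x\in X:\mathcal{C}_x=X\}=\bigcap_{i,j\geq 1}E_{i,j},\qquad E_{i,j}:=\Bigl\{x:\limsup_{N\to\infty}\tfrac{1}{N}\sum_{k=0}^{N-1}\psi_{i,j}(f^k x)>0\Bigr\}.
\]

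Second, fix $(i,j)$ and use the hypothesis $\mathcal{C}_{\x}=X$ to extract $\beta_{i,j}>0$ and $N^{(i,j)}_m\to\infty$ with $\frac{1}{N^{(i,j)}_m}\sum_{k<N^{(i,j)}_m}\psi_{i,j}(f^k\x)>\beta_{i,j}$. Continuity of $f$ and of $\psi_{i,j}$ makes the superlevel set
\[
H^{(i,j)}_m:=\Bigl\{x\in X:\tfrac{1}{N^{(i,j)}_m}\sum_{k=0}^{N^{(i,j)}_m-1}\psi_{i,j}(f^k x)>\beta_{i,j}/2\Bigr\}
\]
open, so $R_{i,j}:=\bigcap_{M\geq 1}\bigcup_{m\geq M}H^{(i,j)}_m\subseteq E_{i,j}$ is a $G_\delta$. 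Since $\psi_{i,j}\leq 1$, the Ces\`aro sum along $f^T\x$ differs from that along $\x$ by at most $2T/N^{(i,j)}_m\to 0$, so for each fixed $T\geq 0$ the point $f^T\x$ lies in all but finitely many $H^{(i,j)}_m$ and hence in $R_{i,j}$. Because $\mathcal{C}_{\x}=X\subseteq\overline{\mathcal{O}_f(\x)}$, the forward orbit $\{f^T\x\}_{T\geq 0}$ is dense in $X$, and consequently each $R_{i,j}$ is a dense $G_\delta$.

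Finally, Baire's theorem on the Polish (in fact compact, since $\overline{\mathcal{O}_f(\x)}=X$) space $X$ gives that $\bigcap_{i,j}R_{i,j}$ is itself a dense $G_\delta$, which is contained in $\bigcap_{i,j}E_{i,j}=\{x:\mathcal{C}_x=X\}$, proving residuality. The main obstacle is the first step: one needs to fix a countable family of continuous observables whose two-sided comparison with indicators of basic open balls makes ``$\mathcal{C}_x=X$'' genuinely equivalent to countably many $\limsup$-positivity statements about Ces\`aro sums of continuous functions, so that the superlevel sets $H^{(i,j)}_m$ are open and the Baire-category machinery can be applied. Once this reformulation is in place, the remaining ingredients---openness of finite averages, near-invariance of Ces\`aro averages under iteration by a bounded power of $f$, and Baire---are routine.
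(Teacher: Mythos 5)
Your proof is correct and follows essentially the same route as the paper's proof of Theorem~\ref{thm1.4}: both decompose $\{x\colon\mathcal{C}_x=X\}$ via a countable base into $G_\delta$ sets defined by a fixed positive lower bound on finite-time Birkhoff averages along a sequence of times, obtain open superlevel sets (the paper relies on lower semicontinuity of averages of indicators of open sets where you substitute continuous tent functions), and derive density from the orbit of $\x$ together with the near-invariance of Ces\`aro averages under finite time shifts. The only slip is the parenthetical claim that $X$ is compact because $\overline{\mathcal{O}_f(\x)}=X$ --- an orbit closure in a Polish space need not be compact --- but this is harmless since Baire's theorem applies to Polish spaces anyway.
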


We now turn to some applications of Theorem~\ref{thm1.4}. For our convenience, we first introduce two notions for a $C^0$-semi flow $f\colon \mathbb{R}_+\times X\rightarrow X$ on a Polish space $X$.

\begin{defn}\label{def1.5}
An $f$-invariant subset $\Lambda$ of $X$ is referred to as \textit{generic} if there exists some point $x\in\Lambda$ with $\Lambda=\mathcal{C}_x$.
\end{defn}

According to Conjecture~\ref{conj1.3} (or precisely speaking Theorem~\ref{thm1.4}) for any generic minimal center of attraction $\mathcal{C}_x$ of a motion $f(t,x)$, the set of points $y\in\mathcal{C}_x$ with $\mathcal{C}_x=\mathcal{C}_y$ is residual in $\mathcal{C}_x$. 

\begin{defn}\label{def1.6}
We say that a motion $f(t,x)$ is \textit{chaotic} for $f$ if there can be found some point $y\in X$ such that 
\begin{gather*}
\liminf_{t\to+\infty}d(f(t,x),y)=0,\quad\limsup_{t\to+\infty}d(f(t,x),y)>0
\intertext{and}\liminf_{t\to+\infty}d(f(t,x),f(t,y))=0,\quad\limsup_{t\to+\infty}d(f(t,x),f(t,y))>0.
\end{gather*}
\end{defn}

By using Theorem~\ref{thm1.4}, we will show that if $\mathcal{C}_x$ is not generic, then the chaotic behavior occurs near $\mathcal{C}_x$; see Theorems~\ref{thm3.1} and \ref{thm3.2} stated and proved in Section~\ref{sec3}. On the other hand whenever $\mathcal{C}_x$ is generic and it is not ``very simple'', then chaotic motions are generic in $\mathcal{C}_x$; that is the following

\begin{thm}\label{thm1.7}
Let $f(t,p)$ be a Lagrange stable motion in a Polish space $X$. If $\mathcal{C}_p$ is generic and itself is not a minimal subset of $(X,f)$, then there exists a residual subset $S$ of $\mathcal{C}_p$ such that $f(t,x)$ is chaotic for each $x\in S$.
\end{thm}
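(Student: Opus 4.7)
The plan is to take $S:=\{x\in\mathcal{C}_p:\mathcal{C}_x=\mathcal{C}_p\}$ and show every $x\in S$ is chaotic. Because $\mathcal{C}_p$ is generic, there is $q\in\mathcal{C}_p$ with $\mathcal{C}_q=\mathcal{C}_p$, and $\mathcal{C}_p$ is a compact, hence Polish, $f$-invariant subspace of $X$; Theorem~\ref{thm1.4} applied to the restricted semi-flow on $\mathcal{C}_p$ then yields that $S$ is residual in $\mathcal{C}_p$. Since $\mathcal{C}_p$ is not minimal, Zorn's lemma supplies a proper minimal subset $M\subsetneq\mathcal{C}_p$; fix any $z\in\mathcal{C}_p\setminus M$ and set $\delta:=d(z,M)>0$.

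Next I would verify that every $x\in S$ is chaotic, with a witness $y\in M$. Form the Ellis enveloping semigroup $E=\overline{\{f^t:t\ge0\}}\subseteq\mathcal{C}_p^{\mathcal{C}_p}$ (product topology) and its tail $E_\infty=\bigcap_{T\ge0}\overline{\{f^t:t\ge T\}}$, a compact right-topological subsemigroup of $E$. Since $M\subseteq\mathcal{C}_x\subseteq\omega(x)$ and $M$ is $E$-invariant as a closed $f$-invariant set, the subset $I:=\{g\in E_\infty:g(x)\in M\}$ is a non-empty closed left ideal of $E_\infty$; by Ellis's idempotent theorem $I$ contains an idempotent $e$. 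Setting $y:=e(x)\in M$ gives $e(y)=e\circ e(x)=y$, and by choosing basic neighbourhoods of $e$ that pin down its values at the two points $x,y$ simultaneously one extracts $t_n\to+\infty$ with $f(t_n,x)\to y$ and $f(t_n,y)\to y$.

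With this $y$ I would check Definition~\ref{def1.6}. Condition (a) $\liminf_{t\to+\infty}d(f(t,x),y)=0$ follows from $f(t_n,x)\to y$, and (c) $\liminf_{t\to+\infty}d(f(t,x),f(t,y))=0$ follows from $d(f(t_n,x),f(t_n,y))\to d(y,y)=0$. Since $z\in\mathcal{C}_x$, there also exist $t'_n\to+\infty$ with $f(t'_n,x)\to z$; then $d(f(t'_n,x),y)\to d(z,y)\ge\delta>0$, proving (b), and $d(f(t'_n,x),f(t'_n,y))\ge d(f(t'_n,x),M)\to d(z,M)=\delta>0$ (as $f(t,y)\in M$ for all $t\ge0$), proving (d).

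The main obstacle is producing a single witness $y$ that realises both (a) and (c) along the \emph{same} sequence of times: conditions (a), (b), (d) only use $\mathcal{C}_x=\mathcal{C}_p$ and $M\subsetneq\mathcal{C}_p$, but (c) forces a simultaneous near-return of $f(t,x)$ and $f(t,y)$ to the single point $y$, which is precisely what an idempotent $e\in E_\infty$ accomplishes, because $e$ automatically fixes $y=e(x)$. One could in principle replace the enveloping-semigroup step by a hands-on iteration/compactness argument on $M$, but any such direct approach still ultimately rests on existence of an idempotent in a compact right-topological semigroup.
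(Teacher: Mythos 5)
Your proposal is correct and follows essentially the same route as the paper's proof (Theorem~\ref{thm3.3}): the residual set $S=\{x\in\mathcal{C}_p:\mathcal{C}_x=\mathcal{C}_p\}$ obtained from Theorem~\ref{thm1.4}, a minimal subset of $\mathcal{C}_p$ to which each $x\in S$ is proximal, and a uniformly recurrent proximal companion $y$ in that minimal set realizing the four conditions of Definition~\ref{def1.6}. The only difference is that where the paper cites Furstenberg's Theorem~8.7 (the Auslander--Ellis theorem) to produce $y$, you prove that step directly via an idempotent in the enveloping semigroup, which is exactly the standard proof of the cited result.
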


In Section~\ref{sec4} we will consider a relationship of the minimal center of attraction of a motion with the pointwise recurrence.

Finally we shall consider the multiply attracting of the minimal center of attraction of a motion in Section~\ref{sec5}.
\section{Proof of Sigmund's conjecture}\label{sec2}
This section will be devoted to proving Karl Sigmund's Conjecture~\ref{conj1.3} in the continuous-time case; that is, Theorem~\ref{thm1.4}.

Recall that if a continuous surjective map $T\colon X\rightarrow X$ is topologically transitive, then for a countable basis $U_1,U_2,\dotsc,U_n,\dotsc$ of the underlying space $X$, the set
\begin{equation*}
\left\{x\in X\,\big{|}\,\overline{\mathcal{O}_T(x)}=X\right\}=\bigcap_{n=1}^{+\infty}\bigcup_{m=0}^{+\infty}T^{-m}(U_n),\quad \textrm{where }\mathcal{O}_T(x)=\left\{T^nx\colon n\in\mathbb{Z}_+\right\},
\end{equation*}
is a dense $G_\delta$ set in $X$ because $\bigcup_{m=0}^{+\infty}T^{-m}(U_n)$ is open and dense in $X$.
It is easy to see that this standard argument for topological transitivity does not work here for Sigmund's conjecture (or Theorem~\ref{thm1.4}). So we need a new idea that will explore the times of sojourn of a motion in a domain.

Given any $x\in X$, let $\mathscr{U}_x$ be the neighborhood system of the point $x$ in $X$. To prove Conjecture~\ref{conj1.3}, we will need a classical result, which shows that $\mathcal{C}_x$ consists of the points $y\in X$ which are interesting for $x$.

\begin{lemma}[\cite{Hilmy,NS} for $C^0$-flow]\label{lem2.1}
Let $f\colon\mathbb{R}_+\times X\rightarrow X$ be a $C^0$-semi flow on a metric space $X$. Then for any $x\in X$, there holds
\begin{equation*}
\mathcal{C}_x=\left\{y\in X\,\bigg{|}\,\limsup_{T\to+\infty}\frac{1}{T}\int_0^T\mathds{1}_U(f(t,x))dt>0\ \forall U\in\mathscr{U}_y\right\}.
\end{equation*}
\end{lemma}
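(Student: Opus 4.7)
The plan is to introduce $D_x$ for the right-hand side of the claim and verify three properties: (i) $D_x$ is closed; (ii) every center of attraction of the motion $f(t,x)$ contains $D_x$; and (iii) $D_x$ is itself a center of attraction of $f(t,x)$. Combined with the minimality built into $\mathcal{C}_x$, items (ii) and (iii) will force $\mathcal{C}_x=D_x$.

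Closedness in (i) is immediate: if $y_n\to y$ with $y_n\in D_x$, then any $U\in\mathscr{U}_y$ lies in $\mathscr{U}_{y_n}$ for all large $n$, so the defining upper density for $U$ is positive. For (ii) I would argue by contraposition: if $y$ lies outside a closed center of attraction $C$, set $\delta:=\tfrac{1}{2}d(y,C)>0$ and $U:=B_\delta(y)$, so that $U\cap B_\delta(C)=\emptyset$ and $\mathds{1}_U\le 1-\mathds{1}_{B_\delta(C)}$ pointwise. Averaging along the motion and using $\boldsymbol{P}(f(t,x)\in B_\delta(C))=1$ forces $\limsup_{T\to+\infty}\frac{1}{T}\int_0^T\mathds{1}_U(f(t,x))dt=0$, hence $y\notin D_x$.

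The core of the argument is (iii). Assume toward contradiction that $D_x$ is not a center of attraction of $f(t,x)$. Since $\mathds{1}_{B_\varepsilon(D_x)}$ and its complementary indicator sum to $1$, this means there exists some $\varepsilon>0$ for which
\[
\limsup_{T\to+\infty}\frac{1}{T}\int_0^T\mathds{1}_{K}(f(t,x))dt>0,\qquad K:=\overline{\mathcal{O}_f(x)}\setminus B_\varepsilon(D_x).
\]
The Lagrange stability of $f(t,x)$, implicit in the very existence of $\mathcal{C}_x$ (Lemma~\ref{lem1.2}), makes $\overline{\mathcal{O}_f(x)}$, and hence the closed subset $K$, compact. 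Every $y\in K$ satisfies $y\notin D_x$, so by definition it admits an open neighborhood $V_y$ with $\limsup_{T\to+\infty}\frac{1}{T}\int_0^T\mathds{1}_{V_y}(f(t,x))dt=0$. Extracting a finite subcover $V_{y_1},\dots,V_{y_n}$ of $K$ and combining the pointwise bound $\mathds{1}_K\le\sum_{i=1}^n\mathds{1}_{V_{y_i}}$ with finite subadditivity of $\limsup$ yields
\[
\limsup_{T\to+\infty}\frac{1}{T}\int_0^T\mathds{1}_K(f(t,x))dt\le\sum_{i=1}^n\limsup_{T\to+\infty}\frac{1}{T}\int_0^T\mathds{1}_{V_{y_i}}(f(t,x))dt=0,
\]
the desired contradiction.

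The decisive moment is the finite-subcover step in (iii): compactness of $K$ (hence of the orbit closure) is what upgrades the pointwise-in-$y$ vanishing of the upper density to a uniform one on $K$. Without Lagrange stability one would only obtain a countable cover of $K$, and countable subadditivity of $\limsup$ fails in general, so this is precisely where the argument would break. All other steps are essentially bookkeeping with the definitions of $D_x$ and of center of attraction.
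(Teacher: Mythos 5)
Your proposal is correct, and for one of the two inclusions it takes a genuinely different route from the paper. The containment ``$D_x$ is contained in every center of attraction'' (your step (ii)) is essentially the paper's argument for $I(x)\subseteq\mathcal{C}_x$: both hinge on the observation that a ball around a point off a closed center of attraction $C$ is disjoint from a small neighborhood of $C$, so its sojourn set is trapped in a density-zero complement. The difference lies in the other inclusion. The paper proves $\mathcal{C}_x\subseteq I(x)$ by exploiting minimality directly: if some $q\in\mathcal{C}_x$ had a neighborhood of zero sojourn density, then $\mathcal{C}_x\setminus B_{2\varepsilon}(q)$ would still be a center of attraction, contradicting minimality. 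You instead prove that $D_x$ is \emph{itself} a center of attraction via a compactness argument --- covering $K=\overline{\mathcal{O}_f(x)}\setminus B_\varepsilon(D_x)$ by finitely many neighborhoods each of zero upper sojourn density and using finite subadditivity of $\limsup$ --- and then invoke minimality only once at the end to conclude $D_x=\mathcal{C}_x$. Your route is self-contained in that it re-derives the existence of the minimal center of attraction (and shows $D_x\neq\varnothing$) as a byproduct, but it genuinely needs Lagrange stability for the finite subcover, as you correctly flag; the paper's deletion argument needs no compactness and would apply to any $x$ for which a minimal center of attraction happens to exist. Since the paper's standing convention is that $\mathcal{C}_x$ refers to Lagrange stable motions, this extra hypothesis costs nothing in context, and both proofs are valid.
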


\begin{proof}
For self-closeness, we present an independent proof for this lemma which is shorter than that of \cite{NS}. Let $x\in X$ and write
$$
I(x)=\left\{y\in X\,\bigg{|}\,\limsup_{T\to+\infty}\frac{1}{T}\int_0^T\mathds{1}_U(f(t,x))dt>0\ \forall U\in\mathscr{U}_y\right\}.
$$
We first claim that $\mathcal{C}_x\subseteq I(x)$. Indeed, for any $q\in\mathcal{C}_x$, let $U\in\mathscr{U}_q$ be a neighborhood of $q$ in $X$; then
$$
\limsup_{T\to+\infty}\frac{1}{T}\int_0^T \mathds{1}_{U}(f(t,x))dt>0.
$$
Otherwise, one would find some $\varepsilon>0$ so that
$$
\lim_{T\to+\infty}\frac{1}{T}\int_0^T \mathds{1}_{B_{3\varepsilon}(q)}(f(t,x))dt=0.
$$
Further $\mathcal{C}_x-B_{2\varepsilon}(q)$ is a center of attraction of the motion $f(t,x)$ and we thus arrive at a contradiction to the minimality of $\mathcal{C}_x$.

Finally we assert that $\mathcal{C}_x\supseteq I(x)$. By contradiction, let $q\in I(x)-\mathcal{C}_x$ and then we can take some $\varepsilon>0$ such that $d(q,\mathcal{C}_x)\ge 3\varepsilon$. Set
\begin{gather*}
N(B_\varepsilon(q))=\{t\ge0\,|\, f(t,x)\in B_{\varepsilon}(q)\textrm{ for }1\le i\le l\}
\intertext{and}
N(B_\varepsilon(\mathcal{C}_x))=\{t\ge0\,|\, f(t,x)\in B_{\varepsilon}(\mathcal{C}_x)\textrm{ for }1\le i\le l\}.
\end{gather*}
Clearly $N(B_\varepsilon(q))\cap N(B_\varepsilon(\mathcal{C}_x))=\varnothing$. However, by definitions, $N(B_\varepsilon(q))$ has positive upper density and $N(B_\varepsilon(\mathcal{C}_x))$ has density $1$ in $[0,\infty)$. This is a contradiction.

The proof of Lemma~\ref{lem2.1} is thus completed.
\end{proof}

In Definition~\ref{def1.1} we do not require the $f$-invariance of $\mathcal{C}_x$. However, it is actually $f$-invariant by Lemma~\ref{lem2.1}.

\begin{cor}\label{cor2.2}
Let $f\colon\mathbb{R}_+\times X\rightarrow X$ be a $C^0$-semi flow on a metric space $X$. For any $x\in X$, $\mathcal{C}_x$ is $f$-invariant.
\end{cor}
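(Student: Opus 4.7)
The plan is to deduce invariance directly from the characterization provided by Lemma~\ref{lem2.1}. Fix $y\in\mathcal{C}_x$ and $s\ge 0$, and set $z=f(s,y)$. I would show $z\in\mathcal{C}_x$ by verifying the upper-density criterion of Lemma~\ref{lem2.1} at $z$.

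First I would pick an arbitrary neighborhood $V\in\mathscr{U}_z$ and pull it back: by the continuity of the map $f^s\colon X\rightarrow X$ (condition (2) in the definition of a $C^0$-semi flow), the set $U=(f^s)^{-1}(V)$ is an open neighborhood of $y$. Since $y\in\mathcal{C}_x$, Lemma~\ref{lem2.1} yields
\begin{equation*}
\limsup_{T\to+\infty}\frac{1}{T}\int_0^T\mathds{1}_U(f(t,x))\,dt>0.
\end{equation*}

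Next I would transport this into a lower bound on the density of visits to $V$. The semigroup condition gives $f(s,f(t,x))=f(t+s,x)$, so whenever $f(t,x)\in U$ we have $f(t+s,x)\in V$; hence $\mathds{1}_U(f(t,x))\le\mathds{1}_V(f(t+s,x))$ for all $t\ge 0$. A change of variables $u=t+s$ yields
\begin{equation*}
\frac{1}{T}\int_0^T\mathds{1}_U(f(t,x))\,dt\le\frac{1}{T}\int_s^{T+s}\mathds{1}_V(f(u,x))\,du,
\end{equation*}
and the right-hand side differs from $\frac{1}{T}\int_0^T\mathds{1}_V(f(u,x))\,du$ by at most $2s/T\to 0$. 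Taking $\limsup$ therefore gives a positive upper density for $V$ along the motion $f(\cdot,x)$. Since $V\in\mathscr{U}_z$ was arbitrary, Lemma~\ref{lem2.1} shows $z=f(s,y)\in\mathcal{C}_x$, proving the $f$-invariance.

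There is no real obstacle here beyond being careful with the time translation: the argument is essentially that Cesàro averages are insensitive to finite shifts, combined with the continuity of $f^s$ which lets us pull neighborhoods back. Closedness of $\mathcal{C}_x$ is already built into Definition~\ref{def1.1}, so no additional work on that point is needed.
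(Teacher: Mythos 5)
Your proof is correct and follows essentially the route the paper intends: the paper offers no written argument for Corollary~\ref{cor2.2} beyond the remark that it follows from Lemma~\ref{lem2.1}, and your pullback of neighborhoods through $(f^s)^{-1}$ combined with the shift-invariance of the Ces\`aro upper density is exactly the natural way to make that remark precise.
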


Since for any real number $\theta>0$ and any integer $N\ge0$ there holds
\begin{equation*}
0\le\int_0^\theta\mathds{1}_U(f(N\theta+t,x))dt\le\theta,
\end{equation*}
then Lemma~\ref{lem2.1} implies immediately the following.

\begin{cor}\label{cor2.3}
Let $f\colon\mathbb{R}_+\times X\rightarrow X$ be a $C^0$-semi flow on a Polish space $X$. For any $x\in X$ and any $\theta>0$, there holds
\begin{equation*}
\mathcal{C}_x=\left\{y\in X\,\bigg{|}\,\limsup_{\mathbb{N}\ni N\to+\infty}\frac{1}{N}\int_0^{N\theta}\mathds{1}_U(f(t,x))dt>0\ \forall U\in\mathscr{U}_y\right\}.
\end{equation*}
\end{cor}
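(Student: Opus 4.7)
The plan is to obtain Corollary~\ref{cor2.3} directly from Lemma~\ref{lem2.1} via a straightforward discretization argument. By Lemma~\ref{lem2.1}, the set $\mathcal{C}_x$ is already characterized as the set of those $y$ for which every $U\in\mathscr{U}_y$ satisfies $\limsup_{T\to+\infty}\frac{1}{T}\int_0^T\mathds{1}_U(f(t,x))\,dt>0$. So it suffices to prove that for each fixed $\theta>0$ and each open set $U$, this continuous-time limsup is strictly positive if and only if $\limsup_{N\to\infty}\frac{1}{N}\int_0^{N\theta}\mathds{1}_U(f(t,x))\,dt>0$.

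The key step is to show that replacing the continuous parameter $T$ by the sequence $N\theta$, with $N=\lfloor T/\theta\rfloor$, does not alter the limsup. Writing $T=N\theta+r$ with $r\in[0,\theta)$ and using the hinted bound
$$0\le \int_{N\theta}^{T}\mathds{1}_U(f(t,x))\,dt\le r\le \theta,$$
together with $|N\theta/T-1|\le \theta/T$ and the trivial estimate $\frac{1}{N\theta}\int_0^{N\theta}\mathds{1}_U\le 1$, one obtains
$$\left|\frac{1}{T}\int_0^T\mathds{1}_U(f(t,x))\,dt-\frac{1}{N\theta}\int_0^{N\theta}\mathds{1}_U(f(t,x))\,dt\right|\le\frac{2\theta}{T},$$
which tends to $0$ as $T\to+\infty$. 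Hence the real-parameter limsup coincides with the integer-parameter limsup taken along $N\theta$.

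Finally, since $\theta>0$ is a fixed positive constant, the condition $\limsup_{N\to\infty}\frac{1}{N\theta}\int_0^{N\theta}\mathds{1}_U\,dt>0$ is equivalent to $\limsup_{N\to\infty}\frac{1}{N}\int_0^{N\theta}\mathds{1}_U\,dt>0$, and combining the two equivalences yields the claimed characterization. There is no real obstacle here; the whole content is the elementary observation that a fractional-period remainder contributes at most $\theta$ to the integral and therefore gets washed out by the $1/T$ normalization.
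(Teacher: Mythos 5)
Your proposal is correct and follows essentially the same route as the paper: both decompose $T=N\theta+r$ with $0\le r<\theta$, observe that the remainder contributes at most $\theta$ to the integral and is washed out by the $1/T$ normalization, and then note that rescaling the normalization from $N\theta$ to $N$ does not affect positivity of the limsup. Your version merely makes the error estimate $2\theta/T$ explicit where the paper leaves it implicit.
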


\noindent Here $\mathbb{N}=\{1,2,\dotsc\}$.
\begin{proof}
The statement follows from that $T=N_T\theta+r_T$ where $N_T\in\mathbb{N}, 0\le r_T<\theta$ for any $T\ge1$ and that
\begin{equation*}
\limsup_{T\to+\infty}\frac{1}{T}\int_0^{T}\mathds{1}_U(f(t,x))dt=\limsup_{\mathbb{N}\ni N\to+\infty}\frac{1}{N\theta}\int_0^{N\theta}\mathds{1}_U(f(t,x))dt
\end{equation*}
for any $U\in\mathscr{U}_y$ and any $y\in X$.
\end{proof}

We are now ready to prove one of our main statements---Theorem~\ref{thm1.4}---by applying Lemma~\ref{lem2.1} and Corollaries~\ref{cor2.2} and \ref{cor2.3}.

\begin{proof}[Proof of Theorem~\ref{thm1.4}]
Write $\Theta=\{x\in X\colon \mathcal{C}_x=X\}$ and let $\x\in X$ be such that $\mathcal{C}_{\x}=X$. Then from Lemma~\ref{lem2.1}, it follows that the orbit $\mathcal{O}_f(\x)=f(\mathbb{R}_+,\x)$ is dense in $X$ and that $\mathcal{O}_f(\x)\subseteq\Theta$ such that for any $y\in\mathcal{O}_f(\x)$,
\begin{equation*}
\limsup_{\mathbb{N}\ni N\to\infty}\frac{1}{N}\int_0^N\mathds{1}_U(f(t,\x))dt=\limsup_{\mathbb{N}\ni N\to\infty}\frac{1}{N}\int_0^N\mathds{1}_U(f(t,y))dt,
\end{equation*}
for every nonempty $U\in\mathscr{T}_X$, where $\mathscr{T}_X$ is the topology of the space $X$.

Let $\mathscr{U}=\{U_i\}_{i=1}^\infty$ be an any given countable base of the topology $\mathscr{T}_X$ of the state space $X$. Then by Corollary~\ref{cor2.3}, we can choose a sequence of positive integers $L_i, i=1,2,\dotsc$, such that
\begin{equation*}
\limsup_{\mathbb{N}\ni N\to\infty}\frac{1}{N}\int_0^N\mathds{1}_{U_i}(f(t,\x))dt>\frac{1}{L_i},
\end{equation*}
for all $i=1,2,\dotsc$. For each $i$, write
\begin{equation*}
\Theta_i=\left\{y\in X\,\big{|}\,\forall n_0\in\mathbb{N}, \exists n>n_0\textrm{ with }\int_0^{nL_i}\mathds{1}_{U_i}(f(t,y))dt>n\right\}.
\end{equation*}
From the continuity of $f(t,y)$ with respect to $(t,y)$ and
\begin{equation*}
\Theta_i=\bigcap_{n_0=1}^\infty\bigcup_{n>n_0}\left\{y\in X\,\big{|}\,\int_0^{nL_i}\mathds{1}_{U_i}(f(t,y))dt>n\right\},
\end{equation*}
it follows that $\Theta_i$ is a $G_\delta$ subset of $X$. Because $\x$ belongs to $\Theta_i$ by noting $N=n_NL_i+r_N$ and $\lim_{N\to\infty}\frac{r_N}{N}=0$ where $0\le r_N<L_i$ and then similarly $\mathcal{O}_f(\x)\subseteq\Theta_i$, there follows
$\bigcap_{i=1}^\infty\Theta_i$ is a dense and $G_\delta$ set in $X$.

Since for any $y\in\bigcap_{i=1}^\infty\Theta_i$ we have
\begin{equation*}
\limsup_{\mathbb{N}\ni N\to\infty}\frac{1}{N}\int_0^N\mathds{1}_{U_i}(f(t,y))dt\ge\frac{1}{L_i}>0
\end{equation*}
for any $U_i\in\mathscr{U}$ and $\mathscr{U}$ is a base of the topology $\mathscr{T}_X$ of the space $X$, we see that $y\in\Theta$ from Corollary~\ref{cor2.3}.
Therefore, $\Theta$ is a residual set in $X$.

This completes the proof of Theorem~\ref{thm1.4}.
\end{proof}

Finally we mention that the statement of Theorem~\ref{thm1.4} is also valid for a continuous transformation $f\colon X\rightarrow X$ of a Polish space $X$.

\section{Li-Yorke chaotic pairs and sensitive dependence on initial data}\label{sec3}
In this section, we shall apply Theorem~\ref{thm1.4} to the study of chaos of a topological dynamical system on a Polish space $X$.

Let $f\colon\mathbb{R}_+\times X\rightarrow X$ be a $C^0$-semi flow on the Polish space $(X,d)$. Recall that two points $x,y\in X$ is called a \textit{Li-Yorke chaotic pair} for $f$ if
\begin{gather*}
\limsup_{t\to+\infty}d(f(t,x),f(t,y))>0\quad \textrm{and}\quad\liminf_{t\to+\infty}d(f(t,x),f(t,y))=0.
\end{gather*}
That is to say, $x$ is proximal to $y$ but not asymptotical. If there can be found an uncountable set $S\subset X$ such that every pair of points $x,y\in S, x\not=y$, is a Li-Yorke chaotic pair for $f$, then we say $f$ is \textit{Li-Yorke chaotic}; see, e.g., Li and Yorke 1975~\cite{LY}.

\subsection{Nongeneric case}\label{sec3.1}
Let $f\colon\mathbb{R}_+\times X\rightarrow X$ be a $C^0$-semi flow on a compact metric space. The following theorem shows that if $\mathcal{C}_x$ has no the generic dynamics in the sense of Definition~\ref{def1.5}, then $f$ has the chaotic dynamics.

\begin{thm}\label{thm3.1}
Given any $x\in X$, if $\mathcal{C}_x$ is not generic, then one can find some point $q\in\Delta$, for any closed $f$-invariant subset $\Delta\subseteq\mathcal{C}_x$, such that $(x,q)$ is a Li-Yorke chaotic pair for $f$.
\end{thm}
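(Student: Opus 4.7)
The plan is to locate $q$ inside an $f$-minimal subset of $\Delta$ that is proximal to $x$, and to read off non-asymptoticity from the hypothesis that $\mathcal{C}_x$ is not generic. I would first apply Zorn's lemma to $\Delta$ (which is compact, since $\Delta\subseteq\mathcal{C}_x\subseteq\overline{\mathcal{O}_f(x)}$ and $x$ is Lagrange stable) to obtain an $f$-minimal set $M\subseteq\Delta$. Every $y\in M$ is uniformly recurrent, so Lemma~\ref{lem2.1} yields $\mathcal{C}_y=M$; because $\mathcal{C}_x$ is not generic in the sense of Definition~\ref{def1.5}, this forces $M\subsetneq\mathcal{C}_x$.

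For the $\limsup$ half of the Li--Yorke condition, I would fix any $z\in\mathcal{C}_x\setminus M$ and set $\rho:=d(z,M)>0$. By Lemma~\ref{lem2.1}, the times at which $f(t,x)\in B_{\rho/2}(z)$ form a set of positive upper density, so I can pick $t_n\to+\infty$ in that set. Since $M$ is $f$-invariant, $f(t_n,q)\in M$ for every $q\in M$, and the triangle inequality gives $d(f(t_n,x),f(t_n,q))\ge d(z,M)-d(f(t_n,x),z)\ge \rho/2$. Hence $\limsup_{t\to+\infty}d(f(t,x),f(t,q))\ge \rho/2>0$ no matter which $q\in M$ is later chosen.

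The main obstacle is the proximal step: selecting $q\in M$ with $\liminf_{t\to+\infty}d(f(t,x),f(t,q))=0$. Here I would invoke an Auslander--Ellis-type argument inside the compact semi-flow $(\overline{\mathcal{O}_f(x)},f)$: letting $E$ denote its Ellis (enveloping) semigroup, one selects a minimal left ideal $L\subseteq E$ with $Lx=M$ together with an idempotent $u\in L$; then $q:=ux\in M$, and along any net $(t_\alpha)$ realizing $u$ one has simultaneously $f^{t_\alpha}(x)\to q$ and $f^{t_\alpha}(q)=u(ux)\to u^2x=q$, which gives the required proximality. An elementary alternative is to discretize by the time-one map $f^1$, apply the classical discrete Auslander--Ellis theorem to a minimal $f^1$-subset of $M$, and observe that $\mathbb{Z}_+$-proximality along integer times implies $\mathbb{R}_+$-proximality. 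Combining this with the previous paragraph, $q\in M\subseteq\Delta$ is the required Li--Yorke chaotic partner of $x$ for $f$.
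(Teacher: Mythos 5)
Your proposal is correct and follows the same overall strategy as the paper: obtain a proximal partner $q$ in the invariant set via the Auslander--Ellis theorem (the paper cites \cite{Aus,Ell} and \cite[Proposition~8.6]{Fur} after noting that $x$ is proximal to $\Delta$; you unpack the same tool through the enveloping semigroup of $(\overline{\mathcal{O}_f(x)},f)$, restricted to a minimal set $M\subseteq\Delta$), and then use non-genericity of $\mathcal{C}_x$ to rule out asymptoticity. The one substantive difference is the $\limsup$ step: the paper argues by contradiction --- if $\lim_{t\to+\infty}d(f(t,x),f(t,q))=0$ then $\mathcal{C}_x=\mathcal{C}_q$ with $q\in\mathcal{C}_x$, making $\mathcal{C}_x$ generic --- whereas you give a direct quantitative estimate $\limsup_{t\to+\infty}d(f(t,x),f(t,q))\ge\frac{1}{2}d(z,M)>0$, valid uniformly for every $q\in M$. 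Your version is more explicit and yields an effective lower bound, at the modest cost of first establishing $\mathcal{C}_y=M$ for $y\in M$ (hence $M\subsetneq\mathcal{C}_x$); when you write that up, do spell out why syndetic return times to an open set give \emph{positive-density} sojourn times (a uniform-continuity thickening on the compact set $M$), since Lemma~\ref{lem2.1} is stated in terms of time averages rather than syndeticity.
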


\begin{proof}
Given any $x\in X$, let $\mathcal{C}_x$ be not generic in the sense of Definition~\ref{def1.5}. Let $\Delta$ be an $f$-invariant nonempty closed subset of $\mathcal{C}_x$. Then by Theorem~\ref{thm1.4} it follows that $x\not\in\mathcal{C}_x$. Moreover from Definition~\ref{def1.1}, we can obtain that $x$ is proximal to $\Delta$; that is, $$\liminf_{t\to+\infty}d(f(t,x), f(t,\Delta))=0.$$
Then from \cite{Aus, Ell} also \cite[Proposition~8.6]{Fur}, it follows that there exists some point $q\in\Delta$ such that
$$
\liminf_{t\to+\infty}d(f(t,x),f(t,q))=0.
$$
We claim that $\limsup_{t\to+\infty}d(f(t,x),f(t,q))>0$. Otherwise, $\lim_{t\to+\infty}d(f(t,x),f(t,q))=0$ and thus $\mathcal{C}_x=\mathcal{C}_q$; and then $\mathcal{C}_x$ is generic by Theorem~\ref{thm1.4}.

The proof of Theorem~\ref{thm3.1} is therefore complete.
\end{proof}

\begin{cor1}
Given any $x\in X$, if $\mathcal{C}_x$ is not generic, then one can find some point $q\in\mathcal{C}_x$ such that $(x,q)$ is a Li-Yorke chaotic pair for $f$ and the set
\begin{gather*}
N_f(x,B_\varepsilon(q))=\{t\ge0\colon f(t,x)\in B_\varepsilon(q)\}
\end{gather*}
is a central set in $\mathbb{R}_+$, which has positive upper density.
\end{cor1}

\begin{proof}
Let $\Delta\subset\mathcal{C}_x$ be a minimal set. Then there can be found by Theorem~\ref{thm3.1} a point $q\in\Delta$ such that $(x,q)$ is a Li-Yorke chaotic pair for $f$. By definition (cf.~\cite[Definition~8.3]{Fur} and \cite[Definition~7.2]{Dai-p}) $N_f(x,B_\varepsilon(q))$ is a central set of $\mathbb{R}_+$ for each $\varepsilon>0$.
In addition, by Lemma~\ref{lem2.1} it follows that $N_f(x,B_\varepsilon(q))$ has positive upper density. This proves the corollary.
\end{proof}

\begin{cor2}
Let there exist a fixed point or a periodic orbit in the minimal center $\mathcal{C}_x$ of attraction of a motion $f(t,x)$. Then we can find a Li-Yorke chaotic pair near $\mathcal{C}_x$.
\end{cor2}

\begin{proof}
First if $\mathcal{C}_x$ is generic in the sense of Definition~\ref{def1.5}, then $f$ restricted to it is topologically transitive and has a fixed point or a periodic orbit. Then by Huang and Ye 2002~\cite[Theorem~4.1]{HY}, it follows that $f$ restricted to $\mathcal{C}_x$ is chaotic in the sense of Li and Yorke.

On the other hand, if $\mathcal{C}_x$ is not generic in the sense of Definition~\ref{def1.5}, then from Theorem~\ref{thm3.1} it follows that there always exists a Li-Yorke chaotic pair near $\mathcal{C}_x$.

The proof of the corollary is thus complete.
\end{proof}

Recall that a motion $f(t,x)$ is referred to as a \textit{Birkhoff recurrent motion} of $f$ if $\overline{\mathcal{O}_f(x)}$ is minimal (cf.~\cite{NS,CD}). It is also called ``uniformly recurrent'' in \cite{Fur} and ``almost periodic'' of von Neumann in \cite{GH} in the discrete-time case.

Motivated by \cite{BB, GW, Dai} we can obtain the following theorem on sensitivity on initial data near the minimal center of attraction of a motion.

\begin{thm}\label{thm3.2}
Let $\mathcal{C}_x$, for a motion $f(t,x)$, be not generic. If the Birkhoff recurrent points of $f$ are dense in $\mathcal{C}_x$, then $f$ has the sensitive dependence on initial data near $\mathcal{C}_x$ in the sense that one can find a sensitive constant $\epsilon>0$ such that for any $a\in X, \hat{x}\in\mathcal{C}_x$ and any $U\in\mathscr{U}_{\hat{x}}$, there exists some point $c\in U$ with $\limsup_{t\to+\infty}d\big{(}f(t,a), f(t,c)\big{)}\ge\epsilon$.
\end{thm}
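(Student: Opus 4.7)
The plan is to use the non-genericity of $\mathcal{C}_x$ together with the density of Birkhoff recurrent points to produce two disjoint minimal subsystems inside $\mathcal{C}_x$, and then take as sensitivity constant one quarter of their mutual distance.

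First I would show that $\mathcal{C}_x$ cannot itself be a minimal subset under $f$. Indeed, if $\mathcal{C}_x$ were minimal, then any Birkhoff recurrent $y \in \mathcal{C}_x$ would satisfy $\overline{\mathcal{O}_f(y)} = \mathcal{C}_x$, and its uniform recurrence would give every neighborhood of every point of $\mathcal{C}_x$ a positive upper density of visits; Lemma~\ref{lem2.1} would then force $\mathcal{C}_y = \mathcal{C}_x$, contradicting that $\mathcal{C}_x$ is not generic. Since the Birkhoff recurrent points are dense in the non-minimal compact invariant set $\mathcal{C}_x$, there must exist at least two (and hence disjoint) minimal subsystems $M, N \subseteq \mathcal{C}_x$: if only one such $M$ existed, it would be closed and dense in $\mathcal{C}_x$, hence equal to $\mathcal{C}_x$. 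Set the candidate sensitivity constant $\epsilon := \tfrac{1}{4} d(M, N) > 0$.

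Next I would analyze an arbitrary orbit $f(t, a)$. Since $d(M, N) = 4\epsilon$, the triangle inequality gives $d(f(t, a), M) + d(f(t, a), N) \ge 4\epsilon$ at every $t$, so at each time at least one of the two distances is $\ge 2\epsilon$. By pigeonhole, at least one of $T_M := \{t \ge 0 : d(f(t, a), M) \ge 2\epsilon\}$ or $T_N := \{t \ge 0 : d(f(t, a), N) \ge 2\epsilon\}$ is unbounded; after relabeling, take $T_M$ unbounded, producing times $t_n \to +\infty$ with $d(f(t_n, a), M) \ge 2\epsilon$. The clean case for exhibiting $c \in U$ is $U \cap M \ne \varnothing$: any $c \in U \cap M$ then satisfies $f(t, c) \in M$ for all $t \ge 0$ by invariance, so $d(f(t_n, a), f(t_n, c)) \ge d(f(t_n, a), M) \ge 2\epsilon$, whence $\limsup_{t \to +\infty} d(f(t, a), f(t, c)) \ge 2\epsilon$. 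The symmetric argument covers $U \cap N \ne \varnothing$.

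The main obstacle I expect is the case where $U$ is disjoint from both $M$ and $N$. To handle it, pick any Birkhoff recurrent $c \in U$ (available by density in $\mathcal{C}_x$), whose orbit closure $M_c$ is a minimal subsystem of $\mathcal{C}_x$ that is necessarily disjoint from both $M$ and $N$. For each $m \in M_c$ one has $d(m, M) + d(m, N) \ge 4\epsilon$, so $M_c$ contains points at distance $\ge 2\epsilon$ from $M$ (say), and by uniform recurrence of $c$ the orbit $f(\cdot, c)$ visits any neighborhood of such a point along a syndetic set of times. A case analysis on whether the orbit of $a$ is asymptotic to $M$, to $N$, or to neither — combined with aligning this syndetic set of visit-times of $c$ against the unbounded separation-time sequence $t_n$ for $a$ — produces $\limsup_{t \to +\infty} d(f(t, a), f(t, c)) \ge \epsilon$. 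The technical heart of this final step, matching the return times of $c$ against the separation times of $a$, is where I expect the proof to require the most care.
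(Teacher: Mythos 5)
Your opening steps are sound and even sharpen a point the paper treats loosely: non-genericity forces $\mathcal{C}_x$ to be non-minimal, density of Birkhoff recurrent points then yields two disjoint minimal sets $M,N\subseteq\mathcal{C}_x$, and $\epsilon=\tfrac14 d(M,N)$ is a legitimate uniform constant (for every $\hat{x}$, at least one of $M,N$ lies at distance $\ge 2\epsilon$ from $\hat{x}$, since $d(\hat{x},M)+d(\hat{x},N)\ge d(M,N)$). The gap is in the case analysis that follows, and it is not merely technical. First, even your ``clean case'' mismatches the data: which of $T_M$, $T_N$ is unbounded is dictated by $a$, while which of $M$, $N$ meets $U$ is dictated by $U$; if $a\in M$ and $U$ meets $M$ but not $N$, your candidate $c\in U\cap M$ has its orbit inside $M$ together with $f(t,a)$, and nothing forces separation (take $M$ a single fixed point and $a=c$). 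Second, and fatally, in the case $U\cap(M\cup N)=\varnothing$ your only candidates for $c$ are Birkhoff recurrent points of $U\cap\mathcal{C}_x$, whose orbits are confined to their minimal orbit closures. Take $\mathcal{C}_x=\{P_1,P_2,P_3\}$, three fixed points with $d(P_1,P_2)=4\epsilon$ (realizable as a minimal center of attraction of an orbit making ever longer sojourns near each $P_i$; the recurrent points $P_1,P_2,P_3$ are trivially dense in $\mathcal{C}_x$, and $\mathcal{C}_x$ is not generic since $\mathcal{C}_{P_i}=\{P_i\}$). With $\hat{x}=P_3$, $U$ a small ball around $P_3$, and $a=P_3$, the only recurrent point of $U\cap\mathcal{C}_x$ is $P_3$ itself, so every admissible $c$ gives $d(f(t,a),f(t,c))\equiv 0$. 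No alignment of syndetic return times of $c$ against ``separation times of $a$'' can repair this, because the orbit of $a$ need not separate from the minimal set carrying $c$ at all. The theorem is still true in this example, but the witness $c$ is not a recurrent point.

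The missing ingredient is precisely the one the paper's proof is built on: besides a recurrent point $p\in U\cap B_{\epsilon/2}(\hat{x})\cap\mathcal{C}_x$, one needs a second, generally non-recurrent point $z\in U$ whose forward orbit later enters a prescribed tube $V=\bigcap_{t\in[0,2T)}f^{-t}\bigl(B_\epsilon(f^t(q))\bigr)$ around a segment of an orbit $\mathcal{O}_f(q)$ lying at distance $\ge 4\epsilon$ from $\hat{x}$. Such a $z$ is manufactured from the original motion: by Lemma~\ref{lem2.1} the orbit of $x$ visits $U\cap B_\epsilon(\hat{x})$ and afterwards visits $V$, so $z=f(s,x)$ works for a suitable $s$. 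Comparing $f(t,p)$ and $f(t,z)$ along the return times of $p$ gives $d(f(t,p),f(t,z))>2\epsilon$ infinitely often, and the triangle inequality then hands every $a\in X$ one of the two points $p,z\in U$ as the required $c$ (at the cost of shrinking the constant to $\epsilon/3$). Your proposal never invokes the defining property of $x$ --- that $\mathcal{C}_x$ is the minimal center of attraction of the motion $f(t,x)$ --- and without it there is no way to produce the escaping point $z$; this is why the hard case of your argument cannot be closed as written.
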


\begin{proof}
Since $\mathcal{C}_x$ is not generic, by Theorem~\ref{thm1.4} it follows that it is not minimal and so it contains at least two different motions of $f$ far away each other. Thus one can find a number $\delta>0$ such that for all $\hat{x}\in \mathcal{C}_x$ there exists a corresponding motion $f(t,q)$ in $\mathcal{C}_x$, not necessarily recurrent, such that
\begin{equation*}
d\big{(}\hat{x},\overline{\mathcal{O}_f(q)}\big{)}\ge\delta,
\end{equation*}
where $d(\hat{x},A)=\inf_{a\in A}d(\hat{x},a)$ for any subset $A$ of $X$. We will show that $f$ has sensitive dependence on initial data with sensitivity constant $\epsilon=\delta/4$ following the idea of \cite[Theorem~4]{Dai}.

For this, we let $\hat{x}$ be an arbitrary point in $\mathcal{C}_x$ and let $U$ be an arbitrary neighborhood of $\hat{x}$ in $X$.
Since the Birkhoff recurrent motions of $(X,f)$ are dense in $\mathcal{C}_x$ from assumption of the theorem, there exists a Birkhoff recurrent point $p\in U\cap B_{\epsilon/2}(\hat{x})\cap\mathcal{C}_x$, where $B_r(\hat{x})$ is the open ball of radius $r$ centered at $\hat{x}$ in $X$. As we noted above, there must exist another point $q\in \mathcal{C}_x$ whose orbit $\mathcal{O}_f(q)$ is of distance at least $4\epsilon$ from the given point $\hat{x}$.

Let $\eta>0$ be such that $\eta<\epsilon/2$. Then from the Birkhoff recurrence of the motion $f(t,p)$, it follows that one can find a constant $T=T(\eta,p)>0$ such that for any $\gamma\ge0$, there is some moment $t_\gamma\in[\gamma,\gamma+T)$ verifying that
\begin{equation*}
d\big{(}p,f^{t_\gamma}(p)\big{)}<\eta.
\end{equation*}
For the given $q$, we simply write
\begin{equation*}
V=\bigcap_{t\in[0,2T)}f^{-t}(B_\epsilon(f^t(q))),\quad \textrm{where }f^{-t}(\cdot)=f(t,\cdot)^{-1}.
\end{equation*}
Clearly from the continuity of topological flow, it follows that $V$ is a neighborhood of $q$ in $X$ but not necessarily open, and it is nonempty since $q\in V$.

Since $\mathcal{C}_x$ is the minimal center of attraction of the motion $f(t,x)$, from Lemma~\ref{lem2.1} it follows that there is at least one point $z\in U\cap B_\epsilon(\hat{x})$ such that $f^N(z)\in V$ for some sufficiently large number $N\gg T$. Let
$$
N=jT-r\quad \textrm{where }0\le r<T,\ j\in\mathbb{N},
$$
and
$$
t_{jT}\in[jT,(j+1)T)\quad \textrm{such that }d(p,f^{t_{jT}}(p))<\eta.
$$
Then $0\le t_{jT}-N<2T$.

By construction, one has
$$
f^{t_{jT}}(z)=f^{t_{jT}-N}(f^N(z))\in f^{t_{jT}-N}(V)\subseteq B_\epsilon(f^{t_{jT}-N}(q)).
$$
From the triangle inequality of metric, it follows that
\begin{equation*}\begin{split}
d(f^{t_{jT}}(p),f^{t_{jT}}(z))&\ge d(p,f^{t_{jT}}(z))-\eta\\
&\ge d(\hat{x},f^{t_{jT}}(z))-d(p,\hat{x})-\eta\\
&\ge d(\hat{x},f^{t_{jT}-N}(q))-d(f^{t_{jT}-N}(q),f^{t_{jT}}(z))-d(p,\hat{x})-\eta.
\end{split}\end{equation*}
Consequently, since $\eta<\epsilon/2$, $p\in B_{\epsilon/2}(\hat{x})$ and $f^{t_{jT}}(z)\in B_\epsilon(f^{t_{jT}-N}(q))$, it holds that
$$
d(f^{t_{jT}}(p),f^{t_{jT}}(z))>2\epsilon.
$$
Therefore from the triangle inequality again, one can obtain either
$$
d(f^{t_{jT}}(\hat{x}),f^{t_{jT}}(z))>\epsilon
$$
or
$$
d(f^{t_{jT}}(\hat{x}),f^{t_{jT}}(p))>\epsilon.
$$
Repeating this argument for another likewise $N$ bigger than $(j+2)T$, one can find a sequence $t_n=j_nT\uparrow+\infty$ as $n\to+\infty$ such that
either
$$d(f^{t_n}(\hat{x}),f^{t_n}(z))>\epsilon$$
or
$$d(f^{t_n}(\hat{x}),f^{t_n}(p))>\epsilon,$$
for all $n\ge1$. Thus in either case, we have found a point $\hat{y}\in U$ such that
$$
\limsup_{t\to+\infty}d(f^t(\hat{x}),f^t(\hat{y}))\ge\epsilon.
$$
Now for any $a\in X$, using the triangle inequality once more, we see either
\begin{gather*}
\limsup_{t\to+\infty}d(f^t(\hat{x}),f^t(a))\ge\frac{\epsilon}{3}\intertext{or}\limsup_{t\to+\infty}d(f^t(\hat{y}),f^t(a))\ge\frac{\epsilon}{3}.
\end{gather*}
Since $\hat{x}, U$ both are arbitrary and $\hat{y}\in U$, hence the proof of Theorem~\ref{thm3.2} is complete.
\end{proof}

We note that if $(\mathcal{C}_x,f)$ is distal (cf.~\cite[Definition~8.2]{Fur}) and not minimal and even not topologically transitive, then the conditions of Theorem~\ref{thm3.2} hold; i.e., the Birkhoff recurrent points are dense in $\mathcal{C}_x$. In fact, $\mathcal{C}_x$ consists of Birkhoff recurrent points (\cite[Corollary of Theorem~8.7]{Fur}) and it is not topologically transitive.

\subsection{Generic case}\label{sec3.2}
Let $f\colon\mathbb{R}_+\times X\rightarrow X$ be a $C^0$-semi flow on a Polish space and $f(t,p)$ a Lagrange stable motion. Then the minimal center $\mathcal{C}_p$ of attraction of the motion $f(t,p)$ is always existent. Theorem~\ref{thm1.7} is just a corollary of the following

\begin{thm}\label{thm3.3}
Let $\mathcal{C}_p$ be generic and not a minimal subset of $(X,f)$. Then there exists a residual subset $S$ of $\mathcal{C}_p$ such that for any $x\in S$ and any minimal subset $\Lambda\subset\mathcal{C}_p$, there corresponds some point $y\in\Lambda$ with the properties: $x,y$ form a Li-Yorke chaotic pair for $f$ and
\begin{equation*}
\liminf_{t\to+\infty}d(f(t,x),y)=0\quad \textrm{and}\quad\limsup_{t\to+\infty}d(f(t,x),y)\ge\frac{1}{2}\mathrm{diam}(\mathcal{C}_p).
\end{equation*}
\end{thm}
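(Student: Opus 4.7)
The plan is to set $S=\{x\in\mathcal{C}_p\colon \mathcal{C}_x=\mathcal{C}_p\}$ and verify, for each $x\in S$ and each minimal $\Lambda\subseteq\mathcal{C}_p$, the two displayed relations together with the two conditions $\liminf_{t\to+\infty}d(f(t,x),f(t,y))=0$ and $\limsup_{t\to+\infty}d(f(t,x),f(t,y))>0$ defining a Li--Yorke pair. Since $f(t,p)$ is Lagrange stable, $\mathcal{C}_p\subseteq\overline{\mathcal{O}_f(p)}$ is compact and $f$-invariant by Corollary~\ref{cor2.2}, hence itself Polish. Genericity supplies some $x_0\in\mathcal{C}_p$ with $\mathcal{C}_{x_0}=\mathcal{C}_p$, and Theorem~\ref{thm1.4} applied to the restriction of $f$ to $\mathcal{C}_p$ then gives that $S$ is residual in $\mathcal{C}_p$. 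Exactly as in the proof of Theorem~\ref{thm1.4}, the equality $\mathcal{C}_x=\mathcal{C}_p$ forces $\mathcal{O}_f(x)$ to be dense in $\mathcal{C}_p$, with visits at arbitrarily large times, for every $x\in S$.

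Next I fix $x\in S$ and an arbitrary minimal $\Lambda\subseteq\mathcal{C}_p$, which exists by compactness of $\mathcal{C}_p$ and Zorn's lemma. Since $\Lambda\subseteq\mathcal{C}_p=\mathcal{C}_x$, Lemma~\ref{lem2.1} gives $\liminf_{t\to+\infty}d(f(t,x),\Lambda)=0$, i.e.\ $x$ is proximal to $\Lambda$. Invoking the Ellis--Auslander proximality theorem (\cite{Aus,Ell}, \cite[Proposition~8.6]{Fur}) just as in the proof of Theorem~\ref{thm3.1}, I extract $y\in\Lambda$ with $\liminf_{t\to+\infty}d(f(t,x),f(t,y))=0$, settling the proximal half of the Li--Yorke pair condition.

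For the remaining three conditions, the relation $\liminf_{t\to+\infty}d(f(t,x),y)=0$ is immediate from $y\in\mathcal{C}_x$ and Lemma~\ref{lem2.1}. For the diameter bound, the density of $\mathcal{O}_f(x)$ in $\mathcal{C}_p$ (at arbitrarily large times) yields
\[
\limsup_{t\to+\infty}d(f(t,x),y)\;\ge\;\sup_{z\in\mathcal{C}_p}d(z,y),
\]
and the triangle inequality $d(a,b)\le d(a,y)+d(y,b)$ applied to any $a,b\in\mathcal{C}_p$ gives, after taking suprema, $\sup_{z\in\mathcal{C}_p}d(z,y)\ge\tfrac12\mathrm{diam}(\mathcal{C}_p)$. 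For the non-asymptotic half of the Li--Yorke pair, I argue by contradiction: if $\lim_{t\to+\infty}d(f(t,x),f(t,y))=0$, then Lemma~\ref{lem2.1} forces $\mathcal{C}_x=\mathcal{C}_y$; but $y\in\Lambda$ minimal gives $\mathcal{C}_y\subseteq\overline{\mathcal{O}_f(y)}=\Lambda$, so $\mathcal{C}_p=\mathcal{C}_x\subseteq\Lambda\subsetneq\mathcal{C}_p$, which is absurd. The hypothesis that $\mathcal{C}_p$ is not minimal is used here and also secures $\mathrm{diam}(\mathcal{C}_p)>0$.

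The whole argument is essentially a packaging of Theorem~\ref{thm1.4} (to produce the residual $S$ of generic-type points inside $\mathcal{C}_p$) with the Ellis--Auslander proximality theorem (to produce the partner $y$ inside $\Lambda$), so no single step should present a serious obstacle. The one piece of bookkeeping that needs explicit justification is that $\mathcal{C}_x$ computed inside the subspace $\mathcal{C}_p$ coincides with $\mathcal{C}_x$ computed inside the ambient space $X$ whenever $x\in\mathcal{C}_p$; this is immediate from $f$-invariance of $\mathcal{C}_p$ (Corollary~\ref{cor2.2}), because $\mathcal{O}_f(x)\subseteq\mathcal{C}_p$ and hence the sojourn densities along $\mathcal{O}_f(x)$ in relative and ambient neighborhoods agree.
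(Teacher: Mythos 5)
Your proposal is correct and follows essentially the same route as the paper: Theorem~\ref{thm1.4} (restricted to the invariant compact set $\mathcal{C}_p$) produces the residual set $S$, the Auslander--Ellis/Furstenberg proximality theorem produces the partner $y\in\Lambda$, and Lemma~\ref{lem2.1} yields the $\liminf$/$\limsup$ conditions. You actually supply details the paper leaves implicit --- the triangle-inequality derivation of the $\tfrac12\mathrm{diam}(\mathcal{C}_p)$ bound, the contradiction $\mathcal{C}_p=\mathcal{C}_y\subseteq\Lambda\subsetneq\mathcal{C}_p$ for the non-asymptotic half, and the check that relative and ambient minimal centers of attraction agree --- so no changes are needed.
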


\begin{proof}
Since $\mathcal{C}_p$ is generic in the sense of Definition~\ref{def1.5}, there exists some point $q\in\mathcal{C}_p$ with $\mathcal{C}_q=\mathcal{C}_p$. Therefore by Theorem~\ref{thm1.4}, there is a residual subset $S$ of $\mathcal{C}_p$ such that $\mathcal{C}_x=\mathcal{C}_p$ for all point $x$ in $S$.
Because $\mathcal{C}_p$ is not a minimal subset of $X$ by hypothesis of Theorem~\ref{thm1.7}, $\overline{\mathcal{O}_f(x)}$ is not minimal for each $x\in S$. 

Let $\Lambda$ be a minimal subset of $\mathcal{C}_p$. Then each $x\in S$ is proximal to $\Lambda$.
Moreover by \cite[Theorem~8.7]{Fur}, it follows that for every $x\in S$, there corresponds some point $y\in\Lambda$ such that $x$ is proximal to $y$ and $f(t,y)$ is Birkhoff recurrent (or uniformly recurrent).
Clearly, $x$ and $y$ is a Li-Yorke chaotic pair for $f$. In addition, from Lemma~\ref{lem2.1} follows that 
\begin{gather*}
\limsup_{t\to+\infty}d(f(t,x),y)\ge\frac{1}{2}\mathrm{diam}(\mathcal{C}_p)\\
\intertext{and} 
\liminf_{t\to+\infty}d(f(t,x),y)=0.
\end{gather*}
This completes the proof of Theorem~\ref{thm3.3}.
\end{proof} 

Therefore by Theorems~\ref{thm3.1} and \ref{thm3.3}, it follows that every Lagrange stable motion $f(t,x)$ is chaotic in the sense of Definition~\ref{def1.6} if its minimal center $\mathcal{C}_x$ of attraction is not a minimal subset of $(X,f)$.
\section{Quasi-weakly almost periodic motion}\label{sec4}
In this section, we let $f\colon\mathbb{R}_+\times X\rightarrow X$ be a $C^0$-semi flow on the compact metric space $X$.

\begin{defn}[{Huang-Zhou 2012~\cite{HZ}}]\label{def4.1}
A point $x$ in $X$ is called a \textit{quasi-weakly almost periodic point} of $f$ if for any $\varepsilon>0$ there exists an integer $N=N(\varepsilon)\ge1$ and an increasing positive integer sequence $\{n_j\}$ with the property that for each $j$ there are $0=t_0<t_1<\dotsm<t_{n_j}<n_jN$ with $t_{i+1}-t_i\ge1$ such that $f(t_i,x)\in B_\varepsilon(x)$ for all $i=1,\dotsc,n_j$.
\end{defn}

As results of the statements of Lemma~\ref{lem2.1} and Theorem~\ref{thm1.4}, we can obtain the following two results.

\begin{prop}\label{prop4.2}
The following statements are equivalent to each other.
\begin{enumerate}
\item[$(1)$] $x\in X$ is a quasi-weakly almost periodic point of $f$.
\item[$(2)$] $x\in\mathcal{C}_x$.
\end{enumerate}
\end{prop}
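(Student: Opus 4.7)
The plan is to apply Lemma~\ref{lem2.1} with $y=x$, which shows that $x\in\mathcal{C}_x$ is equivalent to
\begin{equation*}
\limsup_{T\to+\infty}\frac{1}{T}\int_0^T\mathds{1}_{B_\varepsilon(x)}(f(t,x))\,dt>0\quad\text{for every }\varepsilon>0.
\end{equation*}
With this reformulation, the proposition becomes an equivalence between ``discrete visits to $B_\varepsilon(x)$ that are separated in time by at least $1$'' and ``positive Lebesgue upper density of the time spent in $B_\varepsilon(x)$''.

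For the implication $(1)\Rightarrow(2)$, I fix $\varepsilon>0$ and use compactness of $X$ together with joint continuity of $f$ on $[0,1]\times X$ (hence uniform continuity there) to produce $\delta\in(0,1]$ with $d(f(s,y),y)<\varepsilon/2$ for all $y\in X$ and $s\in[0,\delta]$. Applying Definition~\ref{def4.1} at scale $\varepsilon/2$ gives an integer $N_1=N(\varepsilon/2)$ and instants $0<t_1^{(j)}<\dotsb<t_{n_j}^{(j)}<n_jN_1$ with gaps at least $1$ and $f(t_i^{(j)},x)\in B_{\varepsilon/2}(x)$. By the semigroup property and the triangle inequality, each interval $[t_i^{(j)},t_i^{(j)}+\delta]$ lies in $\{t\ge0:f(t,x)\in B_\varepsilon(x)\}$, and the gap condition makes the $n_j$ such intervals pairwise disjoint, so
\begin{equation*}
\frac{1}{n_jN_1+\delta}\int_0^{n_jN_1+\delta}\mathds{1}_{B_\varepsilon(x)}(f(t,x))\,dt\ge\frac{n_j\delta}{n_jN_1+\delta}\xrightarrow[j\to\infty]{}\frac{\delta}{N_1}>0,
\end{equation*}
which yields the desired positive upper density.

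For the converse $(2)\Rightarrow(1)$, I fix $\varepsilon>0$, let $c>0$ be the positive upper density supplied by Lemma~\ref{lem2.1}, and choose arbitrarily large $M_j$ with $\int_0^{M_j}\mathds{1}_{B_\varepsilon(x)}(f(t,x))\,dt\ge(c/2)M_j$. A greedy selection starting from $t_0^{(j)}=0$, where $t_{i+1}^{(j)}$ is defined as the first instant in $[t_i^{(j)}+1,M_j]$ for which $f(t,x)\in B_\varepsilon(x)$, terminates after some $n_j$ steps. Any visit to $B_\varepsilon(x)$ in $[0,M_j]$ then necessarily lies in $[0,1)\cup\bigcup_{i=1}^{n_j}[t_i^{(j)},t_i^{(j)}+1)$, whence $n_j+1\ge(c/2)M_j$ and $t_{n_j}^{(j)}\le M_j\le 2(n_j+1)/c$. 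Any integer $N>2/c$ therefore eventually satisfies $t_{n_j}^{(j)}<n_jN$, and taking $N(\varepsilon):=\lceil 2/c\rceil+1$ (passing to a tail of $\{n_j\}$ if needed) verifies Definition~\ref{def4.1}.

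The main technical obstacle is the $(1)\Rightarrow(2)$ direction, where a purely counting statement about $n_j$ discrete visit times must be upgraded to positive Lebesgue density. The uniform ``dwell time'' $\delta$ obtained from joint continuity and compactness, together with the automatic separation $t_{i+1}-t_i\ge1\ge\delta$ built into Definition~\ref{def4.1}, is exactly what makes this conversion work without over-counting.
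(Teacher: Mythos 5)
Your proposal is correct, and it is genuinely more self-contained than the paper's proof, which disposes of the proposition in two lines: the author asserts that $(1)\Rightarrow(2)$ ``follows from Lemma~\ref{lem2.1}'' and that $(2)\Rightarrow(1)$ ``follows from Lemma~\ref{lem2.1} and the local section theorem of Bebutov''. For $(1)\Rightarrow(2)$ you supply exactly the missing quantitative step --- the uniform dwell time $\delta$ extracted from compactness of $X$ (which Section~\ref{sec4} does assume) and joint continuity on $[0,1]\times X$, turning $n_j$ separated hits of $B_{\varepsilon/2}(x)$ into disjoint intervals of total length $n_j\delta$ inside $[0,n_jN_1+\delta]$ --- which is presumably what the author had in mind. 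For $(2)\Rightarrow(1)$ you diverge more substantially: where the paper invokes Bebutov's local section theorem to discretize a positive-density sojourn set into $1$-separated return times, your greedy selection plus the covering bound $n_j+1\ge (c/2)M_j$ achieves the same end by elementary measure counting, with no structural input about the flow at all; this is both simpler and more robust (it would survive in settings where local sections are unavailable). The only point to tidy up is that ``the first instant in $[t_i^{(j)}+1,M_j]$ for which $f(t,x)\in B_\varepsilon(x)$'' need not exist, since $\{t\colon f(t,x)\in B_\varepsilon(x)\}$ is open and its infimum over that interval may not be attained; choosing $t_{i+1}^{(j)}$ to be any visit time within $\tfrac12$ of that infimum (or running the selection with the closed ball $\overline{B_{\varepsilon/2}(x)}$ while counting visits to $B_\varepsilon(x)$) repairs this at the cost of a harmless constant in the bound $n_j+1\ge (c/2)M_j$, after which your choice $N(\varepsilon)=\lceil 2/c\rceil+1$ (suitably adjusted) and the passage to a tail of $\{n_j\}$ go through unchanged.
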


\begin{proof}
(1)$\Rightarrow$(2) follows from Lemma~\ref{lem2.1}. (2)$\Rightarrow$(1) follows from Lemma~\ref{lem2.1} and the local section theorem of Bebutov (cf.~\cite[Theorem~V.2.14]{NS}).
\end{proof}

Proposition~\ref{prop4.2} has been proved in \cite{HZ} in the case where $x$ is a Poisson stable point of $f$, i.e., there is a sequence $t_n\to\infty$ such that $f(t_n,x)\to x$ as $n\to\infty$.

\begin{prop}
If $x\in\mathcal{C}_x$, then the set $\{y\in\mathcal{C}_x\,|\, y\in\mathcal{C}_y=\mathcal{C}_x\}$ is dense and $G_\delta$ relative to the subspace $\mathcal{C}_x$.
\end{prop}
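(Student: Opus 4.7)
The plan is to reduce the claim to Theorem~\ref{thm1.4} applied to the restriction of $f$ to the compact invariant set $\mathcal{C}_x$. By Corollary~\ref{cor2.2}, $\mathcal{C}_x$ is $f$-invariant, and being closed in the compact metric space $X$ it is itself Polish. Hence $f|_{\mathbb{R}_+\times\mathcal{C}_x}$ is a $C^0$-semi flow on a Polish space, and by hypothesis the orbit of $x$ stays in $\mathcal{C}_x$.

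Before invoking Theorem~\ref{thm1.4} I would check that $\mathcal{C}_x$ is still the minimal center of attraction of $f(t,x)$ \emph{within} the subspace $\mathcal{C}_x$. It is clearly a center of attraction there; and if some closed $D\subsetneq\mathcal{C}_x$ were also one, then $B_\varepsilon(D)\cap\mathcal{C}_x\subseteq B_\varepsilon(D)$ together with the fact that the orbit of $x$ stays inside $\mathcal{C}_x$ would force $D$ to be a center of attraction in $X$ as well, contradicting the minimality of $\mathcal{C}_x$ there. Theorem~\ref{thm1.4} applied to the restricted system then yields a dense $G_\delta$ set of points $y\in\mathcal{C}_x$ whose \emph{intrinsic} minimal center of attraction $\mathcal{C}_y^{(\mathcal{C}_x)}$ equals $\mathcal{C}_x$.

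To conclude I would identify $\mathcal{C}_y^{(\mathcal{C}_x)}$ with the ambient $\mathcal{C}_y$ for each $y\in\mathcal{C}_x$. Invariance keeps the orbit of $y$ inside $\mathcal{C}_x$, and Lemma~\ref{lem2.1} gives $\mathcal{C}_y\subseteq\overline{\mathcal{O}_f(y)}\subseteq\mathcal{C}_x$, so the integrals computing either center of attraction are the same, hence the two closed sets coincide. Combining this with the previous paragraph, a dense $G_\delta$ subset of $\mathcal{C}_x$ consists of points $y$ with $\mathcal{C}_y=\mathcal{C}_x$; and any such $y$ belongs to $\mathcal{C}_x=\mathcal{C}_y$, so $y\in\mathcal{C}_y$ automatically, giving the claim. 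The only real subtlety is this compatibility between the ``intrinsic'' and ``ambient'' centers of attraction, which the invariance of $\mathcal{C}_x$ together with Lemma~\ref{lem2.1} render routine.
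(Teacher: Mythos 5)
Your proof is correct and takes essentially the same route as the paper, whose entire argument is the one-line remark that the statement follows from Theorem~\ref{thm1.4} with $\mathcal{C}_x$ replacing $X$. You have simply made explicit the routine verifications (invariance of $\mathcal{C}_x$, its minimality as a center of attraction within the subspace, and the agreement of the intrinsic and ambient centers of attraction via Lemma~\ref{lem2.1}) that the paper leaves implicit.
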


\begin{proof}
The statement follows from Theorem~\ref{thm1.4} with $\mathcal{C}_x$ replacing of $X$.
\end{proof}

\section{Minimal center of multi-attraction of a motion}\label{sec5}
Let $f\colon\mathbb{R}_+\times X\rightarrow X$ be a $C^0$-semi flow on a Polish space $X$.
From now on, by $\lambda(dt)$ we denote the standard Haar (Lebesque) measure on $\mathbb{R}$. We will need the following simple but useful fact.

\begin{lemma}\label{lem5.1}
Let $S$ be a measurable subset of $\mathbb{R}_+$ and $\tau>0$. If $S$ has the density $\alpha$, i.e.,
\begin{gather*}
D(S):=\lim_{T\to+\infty}\frac{\lambda(S\cap[0,T])}{T}=\alpha,
\end{gather*}
then $\tau S=\{\tau t\colon t\in S\}$ also has the density $\alpha$ in $\mathbb{R}_+$.
\end{lemma}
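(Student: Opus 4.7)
The plan is to reduce the claim to the dilation invariance of one-dimensional Lebesgue measure. First I would observe the elementary set identity $\tau S\cap[0,T]=\tau\bigl(S\cap[0,T/\tau]\bigr)$, valid because a point of the form $\tau t$ with $t\in S$ lies in $[0,T]$ if and only if $t\in[0,T/\tau]$ (here $\tau>0$ is essential). Measurability of $\tau S$ is automatic since the map $t\mapsto\tau t$ is a homeomorphism of $\mathbb{R}$, and the scaling law $\lambda(\tau A)=\tau\lambda(A)$ is the standard one-dimensional change of variables for Lebesgue measure.

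Applying this scaling to the identity above gives
\begin{equation*}
\lambda\bigl(\tau S\cap[0,T]\bigr)=\tau\cdot\lambda\bigl(S\cap[0,T/\tau]\bigr).
\end{equation*}
Dividing both sides by $T$ and rewriting the right-hand side I would obtain
\begin{equation*}
\frac{\lambda(\tau S\cap[0,T])}{T}=\frac{\lambda(S\cap[0,T/\tau])}{T/\tau}.
\end{equation*}
Since $\tau>0$ is fixed, $T\to+\infty$ forces $T/\tau\to+\infty$, so the right-hand side converges to $D(S)=\alpha$ by hypothesis. Therefore $D(\tau S)=\alpha$ as required.

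There is no real obstacle: the argument is a one-line change of variables. The only point I would be careful about is writing the set identity correctly (so that the factor of $\tau$ coming from the dilation of Lebesgue measure cancels precisely against the rescaling of the interval $[0,T]$ to $[0,T/\tau]$ in the denominator), which is what makes the limiting density invariant under the dilation.
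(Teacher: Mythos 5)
Your proposal is correct and follows essentially the same route as the paper: both use the identity $\tau S\cap[0,T]=\tau\bigl(S\cap[0,T/\tau]\bigr)$ together with the scaling law $\lambda(\tau A)=\tau\lambda(A)$ and then pass to the limit. Your write-up is in fact slightly cleaner, since the paper's version contains a small typo (concluding $D(\tau S)=1$ rather than $D(\tau S)=\alpha$) that your explicit limit computation avoids.
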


\begin{proof}
Let $\tau>0$ be any given. Since $\lambda(\tau A)=\tau\lambda(A)$ and $\lambda(\tau S\cap[0,T])=\tau\lambda(S\cap[0,T\tau^{-1}])$, hence it follows that $D(\tau S)=1$. This proves the lemma.
\end{proof}

It should be noted here that there is no an analogous result for the discrete-time $\mathbb{Z}_+$; for example, $S=\{0,2,4,6,\dotsc\}$ has the density $\frac{1}{2}$ but $\frac{1}{2}S$ has the density $1$ in $\mathbb{Z}_+$.

The following lemma shows that every minimal center of attraction of a motion $f(t,x)$ is multiply attracting as $t\to+\infty$.

\begin{lemma}\label{lem5.2}
Let $\mathcal{C}_x$ be existent for a motion $f(t,x)$ as $t\to+\infty$. Then for any $t_1>0, \dotsc, t_l>0$ where $l\in\mathbb{N}$ and any $\varepsilon>0$,
\begin{gather*}
\lim_{T\to+\infty}\frac{1}{T}\int_0^T\mathds{1}_{B_\varepsilon(\mathcal{\mathcal{C}}_x)}(f(t_1t,x))\dotsm\mathds{1}_{B_\varepsilon(\mathcal{\mathcal{C}}_x)}(f(t_lt,x))dt=1.
\end{gather*}
\end{lemma}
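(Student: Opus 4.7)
The plan is to reduce the multi-time integral to a density statement about a single intersection of measurable subsets of $\mathbb{R}_+$, by invoking Definition~\ref{def1.1} and Lemma~\ref{lem5.1}. First, by the defining property of the minimal center of attraction (Definition~\ref{def1.1}), one has $\P(f(t,x)\in B_\varepsilon(\mathcal{C}_x))=1$, so that the measurable set
\[
A_\varepsilon:=\{t\ge0\colon f(t,x)\in B_\varepsilon(\mathcal{C}_x)\}
\]
has density $D(A_\varepsilon)=1$ in $\mathbb{R}_+$; importantly, this is a genuine limit, not merely a limit superior, which is exactly what is needed in what follows.

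Next, for each $i\in\{1,\dotsc,l\}$ I observe that $\{t\ge0\colon f(t_it,x)\in B_\varepsilon(\mathcal{C}_x)\}=t_i^{-1}A_\varepsilon$, and then apply Lemma~\ref{lem5.1} with $\tau=t_i^{-1}>0$ to conclude that each rescaled set $t_i^{-1}A_\varepsilon$ also has density $1$ in $\mathbb{R}_+$. The integrand appearing in the lemma is precisely the indicator function of the finite intersection $\bigcap_{i=1}^l t_i^{-1}A_\varepsilon$, so the claim reduces to showing that this intersection has density $1$.

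For that, a routine union bound suffices: the complement of $\bigcap_{i=1}^l t_i^{-1}A_\varepsilon$ equals the finite union $\bigcup_{i=1}^l(t_i^{-1}A_\varepsilon)^c$ of sets of density $0$, and by subadditivity of Lebesgue measure this union has density $0$. Hence the intersection has density $1$, which is exactly the claimed limit. I do not anticipate any real obstacle in this argument; the only mild subtlety is that Lemma~\ref{lem5.1} must be applied with the reciprocals $t_i^{-1}$ rather than with $t_i$ themselves, since it is the rescaling $t\mapsto t_i t$ on the argument of $f$ that corresponds to the rescaling $t_i^{-1}$ on the set $A_\varepsilon$.
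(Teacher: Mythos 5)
Your argument is correct and is essentially the paper's own proof: the paper likewise defines $N_\tau(x,U)=\{t\ge0\colon f(\tau t,x)\in U\}$, observes $N_\tau(x,U)=\tau^{-1}N_1(x,U)$, applies Lemma~\ref{lem5.1} together with Definition~\ref{def1.1} to give each $N_{t_i}(x,B_\varepsilon(\mathcal{C}_x))$ density $1$, and concludes that the finite intersection has density $1$. Your only addition is to spell out the union-bound step for the intersection, which the paper leaves implicit.
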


\begin{proof}
For any $\tau>0$ and any open set $U$, define an open set
$$
N_\tau(x,U)=\{t\colon 0\le t<+\infty, f(\tau t,x)\in U\}.
$$
It is easy to check that $\tau^{-1}N_1(x,U)=N_\tau(x,U)$.
Then by Lemma~\ref{lem5.1} and Definition~\ref{def1.1}, it follows that $N_{t_1}(x,B_\varepsilon(\mathcal{C}_x)), \dotsc, N_{t_l}(x,B_\varepsilon(\mathcal{C}_x))$ all have the density $1$. Thus
$$
N_{t_1,\dotsc,t_l}(x,B_\varepsilon(\mathcal{C}_x)):=N_{t_1}(x,B_\varepsilon(\mathcal{C}_x))\cap\dotsm\cap N_{t_l}(x,B_\varepsilon(\mathcal{C}_x))
$$
also has the density $1$ in $\mathbb{R}_+$. This completes the proof of Lemma~\ref{lem5.2}.
\end{proof}

Recall that a motion $f(t,x)$ is said to be \textit{Lagrange stable} as $t\to+\infty$ if the orbit-closure $\overline{\mathcal{O}_f(x)}$ is compact in $X$ (cf.~\cite{NS}). As a direct result of Lemma~\ref{lem5.2}, we can obtain the following.

\begin{cor}\label{cor5.3}
For any Lagrange stable motion $f(t,x)$ as $t\to+\infty$, $\mathcal{C}_x$ is the minimal closed subset of $X$ such that for any $t_1>0, \dotsc, t_l>0$ and any $\varepsilon>0$,
\begin{gather*}
\lim_{T\to+\infty}\frac{1}{T}\int_0^T\mathds{1}_{B_\varepsilon(\mathcal{\mathcal{C}}_x)}(f(t_1t,x))\dotsm\mathds{1}_{B_\varepsilon(\mathcal{\mathcal{C}}_x)}(f(t_lt,x))dt=1.
\end{gather*}
\end{cor}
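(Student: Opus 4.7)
The plan is to verify the two assertions implicit in the statement: first that $\mathcal{C}_x$ itself enjoys the displayed multi-time attracting identity, and second that $\mathcal{C}_x$ is contained in every closed subset $C\subseteq X$ satisfying the same identity, so that $\mathcal{C}_x$ is indeed the smallest such closed set.

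The first assertion is immediate: Lagrange stability of $f(t,x)$ together with Lemma~\ref{lem1.2} guarantees the existence of $\mathcal{C}_x$, and then Lemma~\ref{lem5.2} delivers the required identity verbatim for $C=\mathcal{C}_x$, every finite collection $t_1,\dotsc,t_l>0$, and every $\varepsilon>0$.

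For the second assertion, I would let $C$ be any closed subset of $X$ for which the displayed multi-attracting identity holds. Specializing to $l=1$ and $t_1=1$ yields
\begin{equation*}
\lim_{T\to+\infty}\frac{1}{T}\int_0^T\mathds{1}_{B_\varepsilon(C)}(f(t,x))\,dt=1\quad\forall\,\varepsilon>0,
\end{equation*}
which is exactly the statement that $C$ is a center of attraction of $f(t,x)$ in the sense of Definition~\ref{def1.1}. To conclude $\mathcal{C}_x\subseteq C$, I would argue by contradiction: if some $y\in\mathcal{C}_x$ were to lie outside the closed set $C$, then for $\delta:=d(y,C)/3>0$ the open neighborhoods $B_\delta(y)$ and $B_\delta(C)$ are disjoint, so the sojourn average of $f(t,x)$ in $B_\delta(C)$ being equal to $1$ would force the sojourn average in $B_\delta(y)$ to be $0$, contradicting the intrinsic characterization $y\in\mathcal{C}_x$ furnished by Lemma~\ref{lem2.1}. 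Consequently $\mathcal{C}_x$ is contained in every such $C$, and in particular no proper closed subset of $\mathcal{C}_x$ can share the multi-attracting property.

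There is no serious obstacle here: the whole argument is a packaging of Lemma~\ref{lem5.2} (for the ``$\mathcal{C}_x$ possesses the property'' half) together with the trivial observation that the finite product of indicators of $B_\varepsilon(\mathcal{C}_x)$ collapses to a single indicator when $l=1$, reducing the multi-attracting condition to the classical center-of-attraction condition, whose minimal closed instance is exactly $\mathcal{C}_x$ by Lemma~\ref{lem2.1}.
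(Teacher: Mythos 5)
Your proposal is correct and follows essentially the same route the paper intends: the paper presents Corollary~\ref{cor5.3} as a direct consequence of Lemma~\ref{lem5.2} (which gives the multi-attracting identity for $\mathcal{C}_x$), with minimality obtained exactly as you do, by specializing to $l=1$, $t_1=1$ to see that any closed set with the property is a center of attraction and then invoking the characterization of $\mathcal{C}_x$ from Lemma~\ref{lem2.1}. Your explicit disjoint-neighborhoods argument for why $\mathcal{C}_x\subseteq C$ is a welcome spelling-out of a step the paper leaves implicit, but it introduces no new idea.
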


This result shows that $\mathcal{C}_x$ is the ``minimal center of multi-attraction'' of a Lagrange stable motion $f(t,x)$ as $t\to+\infty$.

\section*{Acknowledgment}
This work was supported partly by National Natural Science Foundation of China grant $\#$11431012, 11271183 and PAPD of Jiangsu Higher Education Institutions.


\end{document}